\DeclareMathAlphabet{\mathbf}{OT1}{cmr}{bx}{it}
\newcommand{\vv}{{\mathbf v}}
\newcommand{\lmin}{\lambda_{\min}}
\newcommand{\lmax}{\lambda_{\max}}
\newcommand{\dmu}{\d\mu(t)}
\renewcommand{\d}{\,\mathrm{d}}
\newcommand{\N}{\mathbb{N}}
\newcommand{\R}{\mathbb{R}}
\newcommand{\Rnn}{\mathbb{R}^{n \times n}}
\newcommand{\C}{\mathbb{C}}
\newcommand{\Cnn}{\mathbb{C}^{n \times n}}
\DeclareMathOperator{\erf}{erf}
\DeclareMathOperator{\erfc}{erfc}
\DeclareMathOperator{\tridiag}{tridiag}
\newtheorem{examplesimple}[theorem]{Example}
\let\oldexamplesimple\examplesimple
\renewcommand{\examplesimple}{\oldexamplesimple\normalfont}
\newenvironment{example}{\begin{examplesimple}}{\hfill$\diamond$\end{examplesimple}}
\newtheorem{remarksimple}[theorem]{Remark}
\let\oldremarksimple\remarksimple
\renewcommand{\remarksimple}{\oldremarksimple\normalfont}
\newenvironment{remark}{\begin{remarksimple}}{\hfill$\diamond$\end{remarksimple}}
\let\oldexperiment\experiment
\renewcommand{\experiment}{\oldexperiment\normalfont}
\pgfplotsset{compat=1.12}
\title{Decay bounds for Bernstein functions of Hermitian matrices with applications to the fractional graph Laplacian\thanks{%
Received... Accepted... Published online on... Recommended by....}}
\author{Marcel Schweitzer\footnotemark[2]}
\date{\today}
\shorttitle{DECAY IN BERNSTEIN FUNCTIONS OF HERMITIAN MATRICES} 
\begin{document}

\maketitle 

\renewcommand{\thefootnote}{\fnsymbol{footnote}}

\footnotetext[2]{School of Mathematics and Natural Sciences, Bergische Universit\"at Wuppertal, 42097 Wuppertal, Germany, \texttt{marcel@uni-wuppertal.de}}

\begin{abstract}
For many functions of matrices $f(A)$, it is known that their entries exhibit a rapid---often exponential or even superexponential---decay away from the sparsity pattern of the matrix $A$. In this paper we specifically focus on the class of Bernstein functions, which contains the fractional powers $A^\alpha$, $\alpha \in (0,1)$ as an important special case, and derive new decay bounds by exploiting known results for the matrix exponential in conjunction with the L\'evy--Khintchine integral representation. As a particular special case, we find a result concerning the power law decay of the strength of connection in nonlocal network dynamics described by the fractional graph Laplacian, which improves upon known results from the literature by doubling the exponent in the power law.
\end{abstract}

\begin{keywords}
matrix functions, Bernstein functions, off-diagonal decay, graph Laplacian, fractional powers, nonlocal dynamics
\end{keywords}

\begin{AMS}
05C82, 15A16, 65F50, 65F60
\end{AMS}

\section{Introduction}\label{sec:intro}
In this paper, we investigate decay in the entries of matrix functions $f(A)$, where $A\in\Cnn$ is a Hermitian matrix and $f$ is a \emph{Bernstein function}, i.e., a nonnegative function $f: (0,\infty) \rightarrow \R$ which is infinitely many times continuously differentiable and satisfies
\begin{equation}\label{eq:bernstein_condition_intro}
(-1)^{n-1}f^{(n)}(z) \geq 0 \text{ for all } n \in \N \text{ and } z \in (0, \infty).
\end{equation}
As the condition~\eqref{eq:bernstein_condition_intro} means that $f^\prime$ is a completely monotonic function, the class of Bernstein functions is intimately related to the completely monotonic function classes of Laplace--Stieltjes and Cauchy--Stieltjes transforms. While the latter classes have received considerable interest in the analysis of matrix functions in recent years, see, e.g.,~\cite{BeckermannReichel2009,FrommerGuettelSchweitzer2014b,GuettelKnizhnerman2013,GuettelSchweitzer2021, MasseiRobol2020} and the references therein, the class of Bernstein functions has not been investigated as thoroughly, although it also frequently occurs in applications: Our present study is motivated by the fact that fractional powers $L_G^\alpha$, $\alpha \in (0,1)$, where $L_G$ is the Laplacian of an undirected graph $G$, have recently emerged as a useful tool in modeling non-local diffusion processes on graphs and in the efficient exploration of large networks; see, e.g.,~\cite{BenziBertacciniDurastanteSimunec2019,BianchiDonatelliDurastanteMazza2021,RiascosMateos2014,RiascosMateos2019}. Clearly, $z^\alpha, \alpha \in (0,1)$ is nonnegative on $(0,\infty)$ and fulfills the condition~\eqref{eq:bernstein_condition_intro}, so that it is a Bernstein function, whereas it is neither a Laplace--Stieltjes nor a Cauchy--Stieltjes function.

Off-diagonal decay in matrix functions is a topic that has been intensively studied in the past, in particular for the case of the matrix inverse $f(A) = A^{-1}$, see, e.g.,~\cite{DemkoMossSmith1984, EijkhoutPolman1988, FordSavostyanovZamarashkin2014, FrommerSchimmelSchweitzer2018, FrommerSchimmelSchweitzer2018b} and for entire functions like the exponential~\cite{BenziGolub1999,LopezPugliese2005,Iserles2000,BenziSimoncini2015,PozzaSimoncini2019}. Quite recently, some of these bounds were also extended to Cauchy--Stieltjes and Laplace--Stieltjes functions of matrices, see, e.g,~\cite{BenziSimoncini2015,FrommerSchimmelSchweitzer2018} and also the recent survey~\cite{Benzi2016}. A priori knowledge of the decay in matrix functions has many different applications, e.g., the efficient construction of sparse approximations~\cite{GiscardLuiThwaiteJaksch2015,Schimmel2020}, the design of linearly scaling algorithms for certain linear algebra problems~\cite{BenziRazouk2007,BowlerMiyazaki2012} and the analysis of probing methods for trace estimation~\cite{FrommerSchimmelSchweitzer2021}. In the context of fractional powers of the graph Laplacian $L_G$, decay estimates can give insight into transition probabilities of non-local random walks on $G$; see~\cite{BenziBertacciniDurastanteSimunec2019, BianchiDonatelliDurastanteMazza2021}. 

The remainder of this paper is organized as follows. In Section~\ref{sec:basics}, we recall some basic facts about Bernstein functions and several special functions that appear in the bounds that we derive in later sections and on functions of matrices in general. Our main results on the decay in Bernstein functions of matrices are presented in Section~\ref{sec:decay}, where we consider both the case of positive semidefinite as well as the case of positive definite matrices $A$. Section~\ref{sec:fractional_laplacian} deals with the special case of the fractional graph Laplacian, where we compare our new decay estimates to previously found estimates from the literature. Concluding remarks are given in Section~\ref{sec:conclusions}.

\section{Basics}\label{sec:basics} 
In this section, we introduce the basic concepts and notations needed for the derivations in later sections of the paper.

\subsection{Bernstein functions}\label{subsec:bernstein_functions}
A \emph{Bernstein function} is a nonnegative function $f: (0,\infty) \rightarrow \R$ which is infinitely many times continuously differentiable and satisfies~\eqref{eq:bernstein_condition_intro}. As already mentioned in Section~\ref{sec:intro}, this implies that $f^\prime$ is a completely monotonic function. Bernstein functions can thus be characterized as nonnegative primitives of completely monotonic functions. An important result on Bernstein functions is that they exhibit the \emph{L\'evy--Khintchine integral representation}
\begin{equation}\label{eq:bernstein_function}
f(z) = a + bz + \int_0^\infty (1-e^{-tz}) \dmu
\end{equation}
where $a,b \geq 0$ and $\mu$ is a positive measure (the \emph{L\'evy measure}) on $(0,\infty)$ such that
\begin{equation*}
\int_0^\infty \min\{t,1\} \dmu < \infty,
\end{equation*}
see, e.g.,~\cite{Berg2007,SchillingSongVondracek2012}. Also note that any Bernstein function admits a continuous extension to the origin (which we also denote by $f$ for convenience) for which the L\'evy-Khintchine representation remains valid, see, e.g.~\cite[Proof of Proposition~3.6]{SchillingSongVondracek2012}.

Important examples of Bernstein functions are, e.g.,
\begin{itemize}
\item $f(z) = z^\alpha$, $\alpha \in (0,1)$,
\item $f(z) = 1-e^{-tz}$, $t \geq 0$ and
\item $f(z) = \log(1+z)$.
\end{itemize}
As the composition of two Bernstein functions is again a Bernstein function, we also have that, e.g., $1-e^{-tz^\alpha}$, $t \geq 0$, $\alpha \in (0,1)$ is a Bernstein function. 

Because they are of particular importance in the applications we consider in later sections, we mention that the L\'evy-Khintchine representation of the fractional powers is explicitly known and given by
\begin{equation}\label{eq:fractional_power_bernstein}
z^\alpha = \frac{\alpha}{\Gamma(1-\alpha)} \int_0^\infty (1-e^{-tz})t^{-\alpha-1} \d t, \qquad \alpha \in (0,1).
\end{equation}
where $\Gamma$ denotes the gamma function; cf.~Section~\ref{subsec:special_functions}

\subsection{The Gamma function and related special functions}\label{subsec:special_functions}
In the following, we introduce some classical special functions which appear in the derivation of our results.

The \emph{gamma function} is defined for $z \in \C$ with $\Re(z) > 0$, where $\Re(z)$ denotes the real part of $z$, via
\begin{equation*}
\Gamma(z) = \int_0^\infty t^{z-1}e^{-z} \d z
\end{equation*}
and has the property that $\Gamma(n) = (n-1)\cdot\Gamma(n-1) = (n-1)!$ for $n \in \N$. Closely related are the \emph{upper} and \emph{lower incomplete gamma function}, defined by
\begin{equation*}
\Gamma(z,s) = \int_s^\infty t^{z-1}e^{-t} \d t \quad\text{ and }\quad \gamma(z,s) = \int_0^s t^{z-1}e^{-t} \d t,
\end{equation*}
respectively. Clearly, we have $\Gamma(z) = \Gamma(z,s) + \gamma(z,s) \text{ for all } s \geq 0$ and $\Gamma(z) = \Gamma(z,0) = \lim\limits_{s \rightarrow \infty} \gamma(z,s).$ We also need the \emph{error function}
\begin{equation*}
\erf(z) = \frac{1}{\sqrt{\pi}}\int_{-z}^z e^{-t^2} \d t
\end{equation*}
and the \emph{complementary error function} $\erfc(z) = 1-\erf(z)$, which are related to the incomplete gamma functions through the identities
\begin{equation*}
\Gamma\left(z,\frac12\right) = \sqrt{\pi}\erfc(\sqrt{z}) \quad\text{ and }\quad \gamma\left(z,\frac12\right) = \sqrt{\pi}\erf(\sqrt{z}).
\end{equation*}

\subsection{Matrix functions and graphs of matrices}\label{subsec:matrix_functions}
Let $A \in \Cnn$ be a Hermitian matrix with eigendecomposition $A = V\Lambda V^H$, where $\Lambda = \diag(\lambda_1,\dots,\lambda_n)$ is the diagonal matrix of eigenvalues and $V$ contains the corresponding orthonormal eigenvectors. Then, for a scalar function $f: \C \rightarrow \C$, the matrix function $f(A)$ is given by the simple relation $f(A) = Vf(\Lambda)V^H$, where $f(\Lambda) = \diag(f(\lambda_1),\dots,f(\lambda_n))$, provided that $f(\lambda_i)$ exists for all $i = 1,\dots,n$. For general---not necessarily diagonalizable---$A$, a similar definition using the Jordan canonical form is possible (where for eigenvalues with nontrivial Jordan blocks, also the derivatives of $f$ need to be defined, up to the block size minus $1$). As we only consider Hermitian matrices in this work, we do not further pursue this topic and refer the reader to~\cite[Chapter 1.2]{Higham2008} for details.

It directly follows from the definition of matrix functions given above that when $f$ is a Bernstein function~\eqref{eq:bernstein_function}, we can insert $A$ in place of $z$ into the integral representation and find
\begin{equation}\label{eq:bernstein_function_A}
f(A) = aI + bA + \int_0^\infty (I-e^{-tA}) \dmu.
\end{equation}

An important concept frequently used in the formulation of decay bounds for matrix functions is that of the \emph{graph of a sparse matrix}: Given $A\in \Cnn$, the graph of $A$ is given by $G(A) = (V, E)$ where $V = \{1,\dots,n\}$ and E = $\{(i,j) : a_{ij} \neq 0, i \neq j\}$. For any two nodes $i, j$ in the graph of $A$, we denote by $d(i,j)$ the \emph{geodesic distance} of the nodes in $G(A)$, i.e., the length of the shortest path from $i$ to $j$. If there is no path from $i$ to $j$ in $G$, then we set $d(i,j) = \infty$. Clearly, when $A$ is Hermitian, $G(A)$ is undirected and thus $d(i,j) = d(j,i)$ for all $i,j$.

\section{Decay in Bernstein functions of Hermitian matrices}\label{sec:decay}

This section contains our main results on off-diagonal decay in Bernstein functions of Hermitian matrices. We first consider the case of positive semidefinite $A$ in Section~\ref{subsec:pos_semidefinite} and then discuss how the estimates can be improved when $A$ is positive definite in Section~\ref{subsec:positive_definite}. As these estimates for positive definite $A$ have the drawback that some integrals occur for which no closed form solution is available, we also derive other, more explicit, bounds for fractional powers of positive definite matrices in Section~\ref{subsec:explicit_bounds_from_cauchy_stieltjes} by exploiting a connection to Cauchy--Stieltjes functions.

\subsection{The positive semidefinite case}\label{subsec:pos_semidefinite}
Using the representation~\eqref{eq:bernstein_function} allows us to relate decay in matrix Bernstein functions to decay in the matrix exponential, a very thoroughly studied topic. In particular, our analysis in this section is based on the following theorem from~\cite{BenziSimoncini2015} on the decay in the matrix exponential, which is in turn based on the well-known convergence result of Hochbruck and Lubich~\cite[Theorem 2]{HochbruckLubich1997} for Lanczos approximations of the action of the matrix exponential. 

\begin{theorem}[Theorem 4.2 in~\cite{BenziSimoncini2015}]\label{the:benzisimoncini_exp}
Let $A \in \Cnn$ be a Hermitian positive semidefinite matrix with eigenvalues in the interval $[0, 4\rho]$ and denote by $d(i,j)$ the geodesic distance of the nodes $i$ and $j$ in the graph of $A$. Then for $i \neq j$ 
\begin{itemize}
\item[(i)] for $\rho t \geq 1$ and $\sqrt{4\rho t} \leq d(i,j) \leq 2\rho t$,
$$|[\exp(-t A)]_{ij}| \leq 10 \exp\left(-\frac{1}{5\rho t}d(i,j)^2\right),$$
\item[(ii)] for $d(i,j) \geq 2\rho t$
$$|[\exp(-t A)]_{ij}| \leq 10 \frac{\exp(-\rho t)}{\rho t}\left(\frac{e\rho t}{d(i,j)}\right)^{d(i,j)}.$$
\end{itemize}
\end{theorem}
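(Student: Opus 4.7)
The plan is to reduce decay bounds for entries of $\exp(-tA)$ to polynomial approximation error estimates by exploiting the sparsity propagation property of powers of $A$. The fundamental observation, already mentioned in Section~\ref{subsec:matrix_functions}, is that $[A^\ell]_{ij} = 0$ whenever $\ell < d(i,j)$, because the $(i,j)$-entry of $A^\ell$ is a sum over walks of length exactly $\ell$ connecting $i$ and $j$, and no such walks exist when $\ell$ is less than the geodesic distance. Consequently, for any polynomial $p_k$ of degree $k < d(i,j)$, we have $[p_k(A)]_{ij} = 0$, and therefore
\begin{equation*}
|[\exp(-tA)]_{ij}| = |e_i^H (\exp(-tA) - p_k(A)) e_j| \leq \norm{(\exp(-tA) - p_k(A)) e_j}_2.
\end{equation*}

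The next step is to identify a specific polynomial $p_k$ for which the right-hand side can be sharply estimated. The natural candidate is the polynomial implicitly generated by running $k+1$ steps of the symmetric Lanczos process on $A$ with starting vector $e_j$; by construction, this produces the approximation $V_{k+1} \exp(-t T_{k+1}) e_1$ which equals $p_k(A) e_j$ for some polynomial $p_k$ of degree $k$. Hochbruck and Lubich~\cite{HochbruckLubich1997} derived explicit error estimates for this Lanczos-based approximation that depend only on $\norm{e_j}_2 = 1$, the spectral interval $[0, 4\rho]$ of $A$, and the iteration count $k$. Their two-regime bound matches exactly the right-hand sides in (i) and (ii) after setting $k = d(i,j)$; substituting this choice (which is admissible since $k-1 < d(i,j)$) then yields the theorem.

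The principal technical work therefore lies not in the sparsity-propagation reduction itself, but in the Hochbruck--Lubich estimate, which is where the two distinct decay regimes originate. Their proof analyzes a shifted Chebyshev (or Faber) series expansion of $e^{-tz}$ on $[0, 4\rho]$: in the intermediate range $\sqrt{4\rho t} \leq k \leq 2\rho t$, where $\rho t \geq 1$ is needed to make this range nonempty, the generating function of the Chebyshev coefficients combined with a saddle-point argument produces the Gaussian-like factor $\exp(-k^2/(5\rho t))$, whereas once $k \geq 2\rho t$ the Taylor remainder together with Stirling's formula yields the factorial-type decay $(e\rho t / k)^k$. The constant $10$ and the denominator $\rho t$ in case~(ii) absorb various lower-order factors arising from summation of the tail of the series.

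The main obstacle is thus encapsulated in invoking the Hochbruck--Lubich theorem with the correct scaling; everything else is a transparent combination of the sparsity observation $[A^\ell]_{ij} = 0$ for $\ell < d(i,j)$ with the inequality $|e_i^H M e_j| \leq \norm{M}_2$. Given that the theorem we aim to prove is already cited verbatim from~\cite{BenziSimoncini2015}, the role of our proof sketch is mainly to make transparent how the polynomial-approximation ingredients from~\cite{HochbruckLubich1997} translate into off-diagonal decay estimates via the simple algebraic identity above.
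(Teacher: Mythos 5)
The paper does not actually prove Theorem~\ref{the:benzisimoncini_exp}: it imports it verbatim from \cite[Theorem~4.2]{BenziSimoncini2015} with the remark that it rests on \cite[Theorem~2]{HochbruckLubich1997}, and your sketch reconstructs precisely that argument --- the vanishing of $[p(A)]_{ij}$ for polynomials of degree below $d(i,j)$ combined with the two-regime Hochbruck--Lubich error bound for $d(i,j)$ Lanczos steps --- so it is consistent with the provenance the paper describes (and your use of the graph distance rather than a bandwidth also covers the generalization to arbitrary sparse matrices that the paper notes). The one detail to tidy is the degree bookkeeping: the Hochbruck--Lubich bound must be invoked with $m=d(i,j)$ \emph{iterations}, so that the underlying polynomial has degree $d(i,j)-1<d(i,j)$; taken literally, your ``setting $k=d(i,j)$'' for a polynomial $p_k$ of degree $k$ would allow a nonzero $(i,j)$ entry of $p_k(A)$, since $[A^{d(i,j)}]_{ij}\neq 0$ in general.
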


Note that in its original form, the result of Theorem~\ref{the:benzisimoncini_exp} was stated for banded matrices, but it directly generalizes to arbitrary sparse matrices; see also~\cite[Section~5]{BenziSimoncini2015}. In~\cite{BenziSimoncini2015}, Theorem~\ref{the:benzisimoncini_exp} was used by Benzi and Simoncini to analyze the decay behavior of Laplace--Stieltjes matrix functions. Similar to Bernstein functions, Laplace--Stieltjes functions can be defined using an integral transform involving exponentials. Many of the arguments we use in the derivation of our results closely follow the techniques used in~\cite{BenziSimoncini2015}.

By exploiting the relation between Bernstein functions~\eqref{eq:bernstein_function} and the matrix exponential, we can prove the following result. 
\begin{lemma}\label{lem:bernstein_decay_integral}
Let $f$ be a Bernstein function~\eqref{eq:bernstein_function}, let $A \in \Cnn$ be positive semidefinite with spectral radius $\rho(A)$ and denote by $d(i,j)$ the geodesic distance of the nodes $i$ and $j$ in the graph of $A$. Then for all $i,j$ with $d(i,j) \geq 2$, we have
\begin{eqnarray}
|[f(A)]_{ij}| &\leq& 10\int_0^{\frac{2d(i,j)}{\rho(A)}} \frac{4\exp(-\frac{1}{4} \rho(A) t)}{\rho(A) t}\left(\frac{e \rho(A) t}{4d(i,j)}\right)^{d(i,j)}\dmu \nonumber\\
& &+ 10\int_{\frac{2d(i,j)}{\rho(A)}}^{\frac{d(i,j)^2}{\rho(A)}} \exp\left(-\frac{4d(i,j)^2}{5\rho(A) t}\right) \dmu \nonumber\\
& & + \int_{\frac{d(i,j)^2}{\rho(A)}}^\infty \left|[\exp(-tA)]_{ij}\right|\dmu.\label{eq:bernstein_bound_exp}
\end{eqnarray}
\end{lemma}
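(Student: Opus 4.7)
The plan is to insert the integral representation of $f$ into $f(A)$ and then apply Theorem~\ref{the:benzisimoncini_exp} pointwise in the integrand, splitting the range of integration according to where each of the two cases of that theorem is applicable.

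The starting point is the matrix version~\eqref{eq:bernstein_function_A} of the L\'evy--Khintchine representation,
\begin{equation*}
f(A) = aI + bA + \int_0^\infty (I - e^{-tA}) \dmu.
\end{equation*}
Since we assume $d(i,j) \geq 2$, node $i$ and node $j$ are not adjacent in $G(A)$, so $[A]_{ij} = 0$; trivially $[I]_{ij} = 0$ as well. Therefore the $aI$ and $bA$ terms contribute nothing to entry $(i,j)$, the $I$ inside the integral drops, and passing the absolute value under the positive measure $\mu$ yields
\begin{equation*}
|[f(A)]_{ij}| \leq \int_0^\infty |[e^{-tA}]_{ij}| \dmu.
\end{equation*}

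Next I rescale so that Theorem~\ref{the:benzisimoncini_exp} is applicable. Since the spectrum of $A$ lies in $[0,\rho(A)]$, I set $4\rho = \rho(A)$ and check the ranges of $t$ for which each of the two parts of the theorem applies to $|[e^{-tA}]_{ij}|$. Part (ii) requires $d(i,j) \geq 2\rho t$, i.e.\ $t \leq 2d(i,j)/\rho(A)$. Part (i) requires $\sqrt{4\rho t} \leq d(i,j) \leq 2\rho t$ together with $\rho t \geq 1$; the inequality on the right of the sandwich gives $t \geq 2d(i,j)/\rho(A)$, the inequality on the left gives $t \leq d(i,j)^2/\rho(A)$, and the condition $\rho t \geq 1$ is automatic in this regime because $d(i,j) \geq 2$ forces $t \geq 2d(i,j)/\rho(A) \geq 4/\rho(A) = 1/\rho$. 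Accordingly I split the integral at the points $2d(i,j)/\rho(A)$ and $d(i,j)^2/\rho(A)$.

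On the first interval I insert part (ii) of Theorem~\ref{the:benzisimoncini_exp} and substitute $\rho = \rho(A)/4$, which produces the factor $4/(\rho(A) t)$ in front of the exponential and the factor $e\rho(A) t/(4d(i,j))$ inside the $d(i,j)$-th power; on the middle interval I insert part (i), and the substitution $\rho = \rho(A)/4$ turns $1/(5\rho t)$ into $4/(5\rho(A) t)$. On the final interval $[d(i,j)^2/\rho(A),\infty)$ no case of Theorem~\ref{the:benzisimoncini_exp} applies, so I simply keep $|[\exp(-tA)]_{ij}|$ as the integrand. Combining the three pieces reproduces~\eqref{eq:bernstein_bound_exp}.

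The only non-routine step is the consistency check that the transitions between the three regimes fall exactly at the breakpoints where the respective hypotheses of Theorem~\ref{the:benzisimoncini_exp} become active; given the assumption $d(i,j) \geq 2$, this verification is straightforward, and the rest of the argument is just substitution and the triangle inequality for the vector-valued integral $\int_0^\infty [e^{-tA}]_{ij} \dmu$.
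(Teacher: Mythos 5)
Your proof is correct and follows essentially the same route as the paper's: insert the L\'evy--Khintchine representation~\eqref{eq:bernstein_function_A}, use $d(i,j)\geq 2$ to discard the $aI$ and $bA$ contributions, and split the integral according to the two regimes of Theorem~\ref{the:benzisimoncini_exp} after writing $\rho = \tfrac14\rho(A)$. Your explicit verification that $\rho t \geq 1$ holds automatically on the middle interval (since $t \geq 2d(i,j)/\rho(A) \geq 4/\rho(A)$ when $d(i,j)\geq 2$) is a detail the paper leaves implicit.
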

\begin{proof}
For $i \neq j$, we have
\begin{equation*}
\left|[I - \exp(-tA)]_{ij}\right| = |\left[\exp(-tA)\right]_{ij}|,
\end{equation*}
so that~\eqref{eq:bernstein_function_A} implies 
\begin{equation}\label{eq:fractional_laplacian_bounds}
|[f(A)]_{ij}| \leq |b\cdot a_{ij}| + \int_0^\infty |\left[\exp(-tA)\right]_{ij}|\dmu.
\end{equation}
We can therefore use bounds for entries of the matrix exponential in order to bound~\eqref{eq:fractional_laplacian_bounds}. Further note that for $i, j$ with $d(i,j) \geq 2$ it directly follows that $|b\cdot a_{ij}| = 0$, so that we can ignore this term. Recasting the conditions on $t$ in Theorem~\ref{the:benzisimoncini_exp} as $\frac{d(i,j)}{2\rho} \leq t \leq \frac{d(i,j)^2}{4\rho}$ or $t \leq \frac{d(i,j)}{2\rho}$, respectively, and writing $\rho = \frac14\rho(A)$, the assertion of the lemma follows.
\end{proof}


The integral representation~\eqref{eq:bernstein_bound_exp} does not give a clear picture of the actual decay behavior at first sight, and in general it can only be evaluated by numerical quadrature. For fractional powers $f(z) = z^\alpha, \alpha \in (0,1)$, the special case that we are most interested in, and in particular for the square root $f(z) = \sqrt{z}$, we can give analytic expressions for all occurring integrals in terms of the special functions introduced in Section~\ref{subsec:special_functions}.

\begin{theorem}\label{the:decay_z_alpha}
Let $A \in \Cnn$ be positive semidefinite with spectral radius $\rho(A)$, let $\alpha \in (0,1)$ and denote by $d(i,j)$ the distance of the nodes $i$ and $j$ in the graph of $A$. Then for all $i,j$ with $d(i,j) \geq 2$, we have
\begin{eqnarray}
|[A^\alpha]_{ij}| &\leq& \frac{\alpha}{\Gamma(1-\alpha)}\cdot\Bigg(\frac{10e^{d(i,j)}\rho(A)^\alpha}{4^\alpha d(i,j)^{d(i,j)}} \cdot \gamma\left(d(i,j)-\alpha-1,\frac{d(i,j)}{2}\right) \nonumber\\ 
& &+ 10\left(\frac{5\rho(A)}{4d(i,j)^2}\right)^\alpha \!\! \cdot \left(\Gamma\left(\alpha,\frac{4}{5}\right) - \Gamma\left(\alpha,\frac{2d(i,j)}{5}\right)\right) +\frac{\rho(A)^\alpha}{\alpha \cdot d(i,j)^{2\alpha}}\Bigg).\label{eq:bounds_z_alpha}
\end{eqnarray}
In particular, for $\alpha = \frac12$ we have
\begin{eqnarray}
|[\sqrt{A}]_{ij}| &\leq& \frac{1}{2\sqrt{\pi}}\cdot\Bigg(\frac{10e^{d(i,j)}\sqrt{\pi\rho(A)}}{2d(i,j)^{d(i,j)}}\cdot\gamma\left(d(i,j) - \frac{3}{2}, \frac{d(i,j)}{2}\right) \nonumber\\
& & + \frac{5\sqrt{5\pi\rho(A)}}{d(i,j)}\left(\erfc\left(\frac{2}{\sqrt{5}}\right)-\erfc\left(\sqrt{\frac{2d(i,j)}{5}}\right)\right) +\frac{2\sqrt{\rho(A)}}{d(i,j)}\Bigg).\label{eq:bounds_squareroot}
\end{eqnarray}
\end{theorem}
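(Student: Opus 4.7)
My plan is to invoke Lemma~\ref{lem:bernstein_decay_integral} with $f(z) = z^\alpha$ and the explicit L\'evy--Khintchine representation from~\eqref{eq:fractional_power_bernstein}, i.e.\ $a=b=0$ and $\dmu = \frac{\alpha}{\Gamma(1-\alpha)} t^{-\alpha-1}\,\d t$. Writing $d:=d(i,j)$ and $\rho:=\rho(A)$ for brevity, this reduces the proof of~\eqref{eq:bounds_z_alpha} to pulling the prefactor $\frac{\alpha}{\Gamma(1-\alpha)}$ outside and then evaluating (or bounding) three explicit one-dimensional integrals, one on each of the intervals $(0,2d/\rho)$, $(2d/\rho, d^2/\rho)$ and $(d^2/\rho,\infty)$, which should match the three summands inside the large parentheses.

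For the first of these integrals I would collapse the $t$-dependent factors to $t^{-1}\cdot t^d\cdot t^{-\alpha-1} = t^{d-\alpha-2}$ multiplied by $\exp(-\rho t/4)$ and a constant gathering $e^d$, $\rho^{d-1}$ and $(4d)^{-d}$, and then apply the substitution $u=\rho t/4$. This turns the remaining integral into $\int_0^{d/2} u^{d-\alpha-2}e^{-u}\,\d u = \gamma(d-\alpha-1, d/2)$, while the accumulated powers of $\rho$ and $4$ should collapse to give exactly the first summand. For the second integral the integrand has the shape $\exp(-4d^2/(5\rho t))\, t^{-\alpha-1}$, which calls for the reciprocal substitution $u=4d^2/(5\rho t)$; the endpoints $t=2d/\rho$ and $t=d^2/\rho$ map to $u=2d/5$ and $u=4/5$, respectively, and the integral should reduce to $(5\rho/(4d^2))^\alpha\bigl(\Gamma(\alpha,4/5)-\Gamma(\alpha,2d/5)\bigr)$. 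For the third integral no comparable exponential bound on $|[\exp(-tA)]_{ij}|$ is available for $t\geq d^2/\rho$, so I would instead use the trivial estimate $|[\exp(-tA)]_{ij}|\leq\|\exp(-tA)\|_2\leq 1$ (which holds because $A$ is positive semidefinite), after which the remaining integral is $\int_{d^2/\rho}^\infty t^{-\alpha-1}\,\d t = \tfrac{1}{\alpha}(\rho/d^2)^\alpha$, matching the third summand.

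The square-root specialization~\eqref{eq:bounds_squareroot} should then follow by plugging $\alpha=1/2$ into~\eqref{eq:bounds_z_alpha}, using $\Gamma(1/2)=\sqrt\pi$ in the prefactor and applying the identity $\Gamma(1/2,s)=\sqrt\pi\,\erfc(\sqrt s)$ from Section~\ref{subsec:special_functions} to rewrite the middle term in terms of complementary error functions. I expect the main obstacle to be combinatorial rather than conceptual: both substitutions are the canonical ones for turning integrals of the form $\int e^{-ct}t^{s-1}\,\d t$ and $\int e^{-c/t}t^{-s-1}\,\d t$ into incomplete gamma functions, so the real work lies in tracking the powers of $\rho$, $d$, $4$ and $e$ carefully enough that the constants collapse exactly to those in~\eqref{eq:bounds_z_alpha}.
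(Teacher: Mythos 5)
Your proposal is correct and follows essentially the same route as the paper: apply Lemma~\ref{lem:bernstein_decay_integral} with the explicit L\'evy measure $\frac{\alpha}{\Gamma(1-\alpha)}t^{-\alpha-1}\d t$ from~\eqref{eq:fractional_power_bernstein}, bound the tail integral via $|[\exp(-tA)]_{ij}|\leq\|\exp(-tA)\|_2\leq 1$, and reduce the first two integrals to incomplete gamma functions (the paper quotes the antiderivatives directly rather than performing your substitutions $u=\rho t/4$ and $u=4d^2/(5\rho t)$, but the computation is identical). The specialization to $\alpha=\tfrac12$ via $\Gamma(\tfrac12)=\sqrt{\pi}$ and $\Gamma(\tfrac12,s)=\sqrt{\pi}\erfc(\sqrt{s})$ also matches the paper.
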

\begin{proof}
We use the representation~\eqref{eq:fractional_power_bernstein} of fractional powers and insert the result of Lemma~\ref{lem:bernstein_decay_integral}. For the third integral, note that 
$$\left|[\exp(-tA)]_{ij}\right| \leq \|\exp(-tA)\|_2 = \exp(-t\rho(A)) \leq 1$$
because $A$ is positive semidefinite. This way, we obtain
\begin{eqnarray}
|[A^\alpha]_{ij}| &\leq& \frac{\alpha}{\Gamma(1-\alpha)} \cdot \bigg( 10\int_0^{\frac{2d(i,j)}{\rho(A)}} \frac{4\exp(-\frac{1}{4} \rho(A) t)}{\rho(A) t}\left(\frac{e \rho(A) t}{4d(i,j)}\right)^{d(i,j)}\cdot t^{-\alpha-1} \d t \nonumber\\
& &\phantom{\frac{\alpha}{\Gamma(1-\alpha)} \cdot \bigg(} + 10\int_{\frac{2d(i,j)}{\rho(A)}}^{\frac{d(i,j)^2}{\rho(A)}} \exp\left(-\frac{4d(i,j)^2}{5\rho(A) t}\right)\cdot t^{-\alpha-1} \d t\nonumber\\
& &\phantom{\frac{\alpha}{\Gamma(1-\alpha)} \cdot \bigg(} + \int_{\frac{d(i,j)^2}{\rho(A)}}^\infty t^{-\alpha-1} \d t \bigg).\label{eq:fractional_laplacian_bound_exp}
\end{eqnarray}
We handle the three integrals in~\eqref{eq:fractional_laplacian_bound_exp} one after the other now. First, consider
\begin{eqnarray*}
    & &\int_0^{\frac{2d(i,j)}{\rho(A)}} \frac{4\exp(-\frac{1}{4} \rho(A) t)}{\rho(A) t}\left(\frac{e \rho(A) t}{4d(i,j)}\right)^{d(i,j)}\cdot t^{-\alpha-1} \d t \nonumber\\
    &=& \frac{4}{\rho(A)} \left(\frac{e \rho(A)}{4d(i,j)}\right)^{d(i,j)} \int_0^{\frac{2d(i,j)}{\rho(A)}} \exp\left(-\frac14\rho(A)t\right)\cdot t^{d(i,j)-\alpha-2} \d t
\end{eqnarray*}
For a general function of the form $\exp(-mt)\cdot t^k$ with $m > 0, k > -1$, we find its antiderivative
\begin{equation*}
    \int \exp(-mt)\cdot t^k \d t = -m^{-k-1} \cdot \Gamma(k+1, mt) + c
\end{equation*}
for some constant $c$. Using the choice $m = \frac{\rho(A)}{4}$ and $k = d(i,j)-\alpha-2$  and inserting the limits of integration, this gives
\begin{eqnarray}
    & &\int_0^{\frac{2d(i,j)}{\rho(A)}} \exp\left(-\frac14\rho(A)t\right)\cdot t^{d(i,j)-\alpha-2} \d t \nonumber\\
    &=& -\left(\frac{4}{\rho(A)}\right)^{d(i,j)-\alpha-1} \left(\Gamma\left(d(i,j)-\alpha-1, \frac{d(i,j)}{2}\right) - \Gamma(d(i,j)-\alpha-1)\right)\nonumber\\
    &=& \left(\frac{4}{\rho(A)}\right)^{d(i,j)-\alpha-1} \gamma\left(d(i,j)-\alpha-1, \frac{d(i,j)}{2}\right).\label{eq:integral1_final}
\end{eqnarray}

The second integral in~\eqref{eq:fractional_laplacian_bound_exp} is of the general form $\exp(-\frac{m}{t})\cdot t^{-\alpha-1}$ with $m > 0$, for which we find the antiderivative
\begin{equation*}
    \int \exp(-mt)\cdot t^{-\alpha-1} \d t = m^{-\alpha} \cdot \Gamma\left(\alpha, \frac{m}{t}\right) + c
\end{equation*}
for some constant $c$. With the choice $m = \frac{4d(i,j)^2}{5\rho(A)}$ and inserting the limits of integration, we obtain
\begin{eqnarray}
& &\int_{\frac{2d(i,j)}{\rho(A)}}^{\frac{d(i,j)^2}{\rho(A)}} \exp\left(-\frac{4d(i,j)^2}{5\rho(A) t}\right)\cdot t^{-\alpha-1} \d t\nonumber\\
&=& \left(\frac{5\rho(A)}{4d(i,j)^2}\right)^\alpha \left(\Gamma\left(\alpha, \frac{4d(i,j)^2}{5\rho(A)} \cdot \frac{\rho(A)}{d(i,j)^2} \right) - \Gamma\left(\alpha, \frac{4d(i,j)^2}{5\rho(A)} \cdot \frac{\rho(A)}{2d(i,j)} \right)\right)\nonumber\\
&=& \left(\frac{5\rho(A)}{4d(i,j)^2}\right)^\alpha \left(\Gamma\left(\alpha, \frac{4}{5}\right) - \Gamma\left(\alpha, \frac{2d(i,j)}{5}\right)\right).\label{eq:integral2_final}
\end{eqnarray}

Finally, for the third integral, the antiderivative is simply
\begin{equation*}
\int t^{-\alpha-1} \d t = -\frac{1}{\alpha}t^{-\alpha} + c
\end{equation*}
for a constant $c$. After inserting the limits of integration, we directly obtain.
\begin{equation}
   \int_{\frac{d(i,j)^2}{\rho(A)}}^\infty t^{-\alpha-1} \d t = \frac{\rho(A)^\alpha}{\alpha \cdot d(i,j)^{2\alpha}} \label{eq:integral3_final}.
\end{equation}
Inserting~\eqref{eq:integral1_final},~\eqref{eq:integral2_final} and~\eqref{eq:integral3_final} into~\eqref{eq:fractional_laplacian_bound_exp} now yields~\eqref{eq:bounds_z_alpha}. 

The more compact formula~\eqref{eq:bounds_squareroot} for the special case $\alpha = \frac12$ directly follows by using  $\Gamma(\frac{1}{2}) = \sqrt{\pi}$ together with the relation $\Gamma(z,\frac{1}{2}) = \sqrt{\pi}\erfc(\sqrt{z})$.
\end{proof}

Numerical experiments illustrating the quality of the bounds obtained from Theorem~\ref{the:decay_z_alpha} will be given in Section~\ref{subsec:power_law}.

\subsection{The positive definite case}\label{subsec:positive_definite}
The bounds derived in Section~\ref{subsec:pos_semidefinite} are obviously also valid when $A$ is positive definite and not just semidefinite. However, in this case, the bounds can be sharpened by using the following observation. 

\begin{proposition}
Let $A \in \Cnn$ and let $\sigma \in \C$. Then
\begin{equation}\label{eq:exponential_shifted}
    \exp(A+\sigma I) = \exp(\sigma)\exp(A).
\end{equation}
\end{proposition}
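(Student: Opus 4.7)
The plan is to invoke two standard facts about the matrix exponential: first, that $\exp(B+C) = \exp(B)\exp(C)$ whenever $B$ and $C$ commute, and second, that $\exp(\sigma I) = \exp(\sigma)\,I$. Both facts are classical and follow directly from the power-series definition of the matrix exponential, so the proof will be very short.

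First I would observe that the scalar matrix $\sigma I$ commutes with every matrix in $\C^{n \times n}$, and in particular with $A$. This allows me to apply the identity $\exp(B+C) = \exp(B)\exp(C)$ for commuting $B,C$, with $B = A$ and $C = \sigma I$, yielding $\exp(A + \sigma I) = \exp(A)\exp(\sigma I)$. The identity itself is most cleanly justified by inserting the two power series and using the binomial theorem $(B+C)^k = \sum_{\ell=0}^k \binom{k}{\ell} B^\ell C^{k-\ell}$, which is valid precisely because $B$ and $C$ commute.

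Next I would compute $\exp(\sigma I)$ directly from the series:
\begin{equation*}
\exp(\sigma I) \;=\; \sum_{k=0}^\infty \frac{(\sigma I)^k}{k!} \;=\; \left(\sum_{k=0}^\infty \frac{\sigma^k}{k!}\right) I \;=\; \exp(\sigma)\,I.
\end{equation*}
Substituting this into the previous step and pulling the scalar $\exp(\sigma)$ out yields $\exp(A + \sigma I) = \exp(\sigma)\exp(A)$, as claimed.

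There is really no substantial obstacle here, since the statement is a textbook identity; the only place one has to be careful is to justify $\exp(B+C) = \exp(B)\exp(C)$ via commutativity rather than taking it as automatic, because the analogous identity fails in general when $B$ and $C$ do not commute. Since $\sigma I$ is in the center of the matrix algebra, this caveat is trivially satisfied in our setting.
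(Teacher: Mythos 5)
Your proof is correct; the paper states this proposition without proof, treating it as a classical fact, and your argument (commutativity of $\sigma I$ with $A$, the product formula for exponentials of commuting matrices via the binomial theorem, and $\exp(\sigma I) = \exp(\sigma) I$ from the power series) is exactly the standard justification the paper implicitly relies on.
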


Using~\eqref{eq:exponential_shifted}, we can first shift the smallest eigenvalue of $A$ to zero and then apply the result of Theorem~\ref{the:benzisimoncini_exp}. 

\begin{corollary}\label{cor:exp_posdef}
Let $A \in \Cnn$ be Hermitian positive definite  with smallest and largest eigenvalue $\lmin$ and $\lmax$, respectively. Denote by $d(i,j)$ the geodesic distance of the nodes $i$ and $j$ in the graph of $A$ and let $\rho := (\lmax-\lmin)/4$. Then for $i \neq j$ 
\begin{itemize}
\item[(i)] for $\rho t \geq 1$ and $\sqrt{4\rho t} \leq d(i,j) \leq 2\rho t$,
$$|[\exp(-t A)]_{ij}| \leq 10 \exp(-t\lmin)\exp\left(-\frac{1}{5\rho t}d(i,j)^2\right),$$
\item[(ii)] for $d(i,j) \geq 2\rho t$
$$|[\exp(-t A)]_{ij}| \leq 10 \frac{\exp(-(\rho+\lmin) t)}{\rho t}\left(\frac{e\rho t}{d(i,j)}\right)^{d(i,j)}.$$
\end{itemize}
\end{corollary}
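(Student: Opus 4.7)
The plan is to reduce to the positive semidefinite case of Theorem~\ref{the:benzisimoncini_exp} by subtracting a multiple of the identity. First I would set $B := A - \lmin I$, so that $B$ is Hermitian positive semidefinite with eigenvalues contained in the interval $[0, \lmax - \lmin] = [0, 4\rho]$. Crucially, since shifting by a scalar multiple of the identity only alters the diagonal entries, we have $G(B) = G(A)$ and hence the geodesic distances $d(i,j)$ for $i \neq j$ coincide in both graphs. Therefore, Theorem~\ref{the:benzisimoncini_exp} applies verbatim to $B$ with the same value of $\rho$ and the same $d(i,j)$.

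Next, I would invoke the shift identity~\eqref{eq:exponential_shifted} with $\sigma = -t\lmin$: writing $-tA = -tB + (-t\lmin) I$, we obtain
\begin{equation*}
\exp(-tA) = \exp(-t\lmin)\,\exp(-tB),
\end{equation*}
and consequently, taking absolute values of the $(i,j)$ entry,
\begin{equation*}
|[\exp(-tA)]_{ij}| = \exp(-t\lmin)\,|[\exp(-tB)]_{ij}|.
\end{equation*}

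It remains to plug in the two cases of Theorem~\ref{the:benzisimoncini_exp} applied to $B$. In case (i), the conditions $\rho t \geq 1$ and $\sqrt{4\rho t} \leq d(i,j) \leq 2\rho t$ are exactly those of the theorem for $B$, so $|[\exp(-tB)]_{ij}| \leq 10 \exp(-d(i,j)^2/(5\rho t))$, and multiplication by $\exp(-t\lmin)$ gives the first bound. In case (ii), $|[\exp(-tB)]_{ij}| \leq 10\,\exp(-\rho t)(\rho t)^{-1}(e\rho t/d(i,j))^{d(i,j)}$, and the factor $\exp(-t\lmin)$ merges with $\exp(-\rho t)$ to produce $\exp(-(\rho+\lmin)t)$, yielding the second bound.

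Essentially no real obstacle arises; the only thing to verify carefully is that the hypotheses of Theorem~\ref{the:benzisimoncini_exp} carry over unchanged, which hinges on the two simple observations that $\spec(B) \subseteq [0, 4\rho]$ by construction and that $G(B) = G(A)$ because $\lmin I$ is diagonal. Everything else is a direct application of the shift formula~\eqref{eq:exponential_shifted}.
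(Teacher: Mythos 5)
Your proposal is correct and follows essentially the same route as the paper's own proof: shift to $\widetilde{A} = A - \lmin I$, apply the identity~\eqref{eq:exponential_shifted} to get $\exp(-tA) = \exp(-t\lmin)\exp(-t\widetilde{A})$, and invoke Theorem~\ref{the:benzisimoncini_exp} for the shifted matrix. Your explicit remark that $G(B) = G(A)$ (so the geodesic distances are unchanged) is a detail the paper leaves implicit, but otherwise the arguments coincide.
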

\begin{proof}
Define the shifted matrix $\widetilde{A} = A-\lmin I$ with eigenvalues in $[0, \lmax-\lmin]$. From~\eqref{eq:exponential_shifted}, it then follows that 
$$\exp(-t\widetilde{A}) = \exp(-tA + t\lmin I) = \exp(t\lmin)\exp(-tA),$$
which is equivalent to $\exp(-tA) = \exp(-t\lmin)\exp(-t\widetilde{A})$. The result then follows by applying Theorem~\ref{the:benzisimoncini_exp} to $\exp(-t\widetilde{A})$.
\end{proof}

Corollary~\ref{cor:exp_posdef} directly gives rise to a result analogous to Lemma~\ref{lem:bernstein_decay_integral} for the positive definite case.

\begin{lemma}\label{lem:bernstein_decay_integral_posdef}
Let $f$ be a Bernstein function~\eqref{eq:bernstein_function}, let $A \in \Cnn$ be positive definite with smallest and largest eigenvalue $\lmin$ and $\lmax$, respectively. Denote by $d(i,j)$ the geodesic distance of the nodes $i$ and $j$ in the graph of $A$ and let $\rho := (\lmax-\lmin)/4$. Then for all $i,j$ with $d(i,j) \geq 2$, we have
\begin{eqnarray}
|[f(A)]_{ij}| &\leq& 10\int_0^{\frac{d(i,j)}{2\rho}} \frac{\exp(-(\rho+\lmin) t)}{\rho t}\left(\frac{e \rho t}{d(i,j)}\right)^{d(i,j)}\dmu \nonumber\\
& &+ 10\int_{\frac{d(i,j)}{2\rho}}^{\frac{d(i,j)^2}{4\rho}} \exp(-t\lmin)\cdot\exp\left(-\frac{d(i,j)^2}{5\rho t}\right) \dmu \nonumber\\
& & + \int_{\frac{d(i,j)^2}{4\rho}}^\infty \exp(-t\lmin)\dmu.\label{eq:bernstein_bound_exp_posdef}
\end{eqnarray}
\end{lemma}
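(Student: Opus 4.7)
The plan is to mirror the argument of Lemma~\ref{lem:bernstein_decay_integral} almost verbatim, substituting Corollary~\ref{cor:exp_posdef} for Theorem~\ref{the:benzisimoncini_exp} as the source of pointwise bounds on $|[\exp(-tA)]_{ij}|$. First I would apply the integral representation~\eqref{eq:bernstein_function_A} entrywise. For $i \neq j$ we have $[I - \exp(-tA)]_{ij} = -[\exp(-tA)]_{ij}$, and since $d(i,j) \geq 2$ forces $a_{ij} = 0$, the linear term $bA$ contributes nothing and the constant term $aI$ only touches diagonal entries. This reduces the problem to bounding $\int_0^\infty |[\exp(-tA)]_{ij}|\,\dmu$.

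Next I would partition the integration range according to the regimes appearing in Corollary~\ref{cor:exp_posdef}. The threshold $d(i,j) = 2\rho t$, equivalently $t = d(i,j)/(2\rho)$, separates the superexponential regime (case (ii)) from the Gaussian regime (case (i)); and $d(i,j) = \sqrt{4\rho t}$, equivalently $t = d(i,j)^2/(4\rho)$, is where case (i) ceases to apply. On $[0, d(i,j)/(2\rho)]$ case (ii) yields the first integrand in~\eqref{eq:bernstein_bound_exp_posdef}; on $[d(i,j)/(2\rho), d(i,j)^2/(4\rho)]$ case (i) yields the second; and on $[d(i,j)^2/(4\rho), \infty)$ I would use the trivial operator-norm estimate $|[\exp(-tA)]_{ij}| \leq \|\exp(-tA)\|_2 = \exp(-t\lmin)$, which is the third integrand. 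Summing the three contributions gives exactly~\eqref{eq:bernstein_bound_exp_posdef}.

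Before declaring victory, a small verification is in order: case (i) of Corollary~\ref{cor:exp_posdef} requires $\rho t \geq 1$ in addition to $\sqrt{4\rho t} \leq d(i,j) \leq 2\rho t$. On the interval I assign to case (i), the lower bound $t \geq d(i,j)/(2\rho)$ combined with $d(i,j) \geq 2$ yields $\rho t \geq 1$, so this side condition is automatic and the partition is admissible.

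The only genuine obstacle is integrability at the endpoints $t = 0$ and $t = \infty$ against the L\'evy measure. Near zero the first integrand behaves like a constant times $t^{d(i,j)-1}$, which for $d(i,j) \geq 2$ is at worst $O(t)$ and is therefore controlled by $\int_0^1 \min\{t,1\}\,\dmu < \infty$; near infinity the third integrand is bounded above by $1$ and is controlled by $\int_1^\infty \min\{t,1\}\,\dmu < \infty$. Thus each integral is well defined, so beyond these checks the proof is essentially a bookkeeping exercise built on top of Corollary~\ref{cor:exp_posdef}.
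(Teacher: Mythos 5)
Your proof is correct and is exactly the argument the paper intends: the paper states this lemma without proof, asserting only that it follows from Corollary~\ref{cor:exp_posdef} in direct analogy with Lemma~\ref{lem:bernstein_decay_integral}, and your partition of the integration range, the trivial tail bound $\|\exp(-tA)\|_2 = \exp(-t\lmin)$, and the verification that $\rho t \geq 1$ holds automatically on the middle interval reproduce that argument faithfully. The integrability checks against the L\'evy measure are a welcome addition that the paper omits.
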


In contrast to the integrals arising in the positive semidefinite case in Lemma~\ref{lem:bernstein_decay_integral}, even for the special case of fractional powers $z^\alpha$, there is no closed-form expression for the more complicated second integral in~\eqref{eq:bernstein_bound_exp_posdef}. Thus, in order to use Lemma~\ref{lem:bernstein_decay_integral_posdef} for predicting the decay in $f(A)$ for positive definite $A$, this integral needs to be evaluated by numerical quadrature.

\subsection{Explicit decay bounds for fractional powers of positive definite matrices via Cauchy--Stieltjes functions}\label{subsec:explicit_bounds_from_cauchy_stieltjes}

For fractional powers, explicit decay bounds can also be obtained in a different way when $A$ is positive definite, by employing a simple trick. For this, we write 
\begin{equation}\label{eq:fractional_powers_stieltjes}
    A^\alpha = A\cdot A^{\alpha-1}
\end{equation}
and exploit the fact that $A^{\alpha-1}$ is a Cauchy--Stieltjes function when $\alpha \in (0,1)$. Using the relation~\eqref{eq:fractional_powers_stieltjes}, known decay results for Cauchy--Stieltjes functions~\cite{BenziSimoncini2015,FrommerSchimmelSchweitzer2018} can easily be transferred to positive fractional powers. A similar trick is used in the context of extending the scope of restarted Krylov subspace methods for Stieltjes matrix functions in~\cite{FrommerGuettelSchweitzer2014a,FrommerGuettelSchweitzer2014b}.

\begin{theorem}\label{the:fractional_powers_via_cauchy_stieltjes}
 Let $A \in \Cnn$ be Hermitian positive definite with condition number $\kappa = \lmax/\lmin$, let $\alpha \in (0,1)$ and denote by $d(i,j)$ the geodesic distance of the nodes $i$ and $j$ in the graph of $A$. Then, for all $i, j$ with $d(i,j) \geq 2$,
 \begin{equation}\label{eq:fractional_powers_bound_stieltjes}
  |[A^{\alpha}]_{ij} | \leq 2 \lambda_{\min}^{\alpha-1} \|A\|_\infty \cdot q^{d(i,j)-1}\quad\mbox{ with } \quad q=\frac{\sqrt{\kappa}-1}{\sqrt{\kappa}+1}.
\end{equation}
\end{theorem}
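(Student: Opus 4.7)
The plan is to exploit the factorization $A^{\alpha}=A\cdot A^{\alpha-1}$ given in~\eqref{eq:fractional_powers_stieltjes} in order to transfer a known Cauchy--Stieltjes decay bound from $A^{\alpha-1}$ to $A^{\alpha}$, paying only a sparsity-induced "distance shift" of one.

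First, I would argue that $A^{\alpha-1}$ is a Cauchy--Stieltjes function of $A$. For $\alpha\in(0,1)$, the exponent $\alpha-1\in(-1,0)$, and the classical integral representation
\begin{equation*}
z^{\alpha-1}=\frac{\sin((1-\alpha)\pi)}{\pi}\int_0^\infty\frac{t^{\alpha-1}}{t+z}\,\d t,\qquad z>0,
\end{equation*}
exhibits $z^{\alpha-1}$ in Cauchy--Stieltjes form with a positive measure on $(0,\infty)$. Consequently, one may invoke the standard exponential decay bounds for Cauchy--Stieltjes functions of Hermitian positive definite matrices from, e.g.,~\cite{BenziSimoncini2015,FrommerSchimmelSchweitzer2018}, which are derived by the usual polynomial-approximation argument (Chebyshev approximation of $1/(t+z)$ on $[\lmin,\lmax]$ uniformly in the parameter $t\geq 0$). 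Since the value of a Cauchy--Stieltjes function is largest at the left endpoint of the spectrum, this yields an estimate of the form
\begin{equation*}
|[A^{\alpha-1}]_{kj}|\,\leq\,2\,\lambda_{\min}^{\alpha-1}\,q^{d(k,j)},\qquad q=\frac{\sqrt{\kappa}-1}{\sqrt{\kappa}+1},
\end{equation*}
for all $k\neq j$, where I use that $\lambda_{\min}^{\alpha-1}$ is exactly the value of $z^{\alpha-1}$ at $z=\lmin$.

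Next, I would write out the entry of interest via ordinary matrix multiplication,
\begin{equation*}
[A^{\alpha}]_{ij}=\sum_{k=1}^n a_{ik}\,[A^{\alpha-1}]_{kj},
\end{equation*}
and exploit that $a_{ik}\neq 0$ forces $d(i,k)\leq 1$ in the graph of $A$. By the triangle inequality for the geodesic distance, this in turn gives $d(k,j)\geq d(i,j)-1$ whenever $a_{ik}\neq 0$ (the assumption $d(i,j)\geq 2$ guarantees $k\neq j$ so that the Cauchy--Stieltjes bound is actually applicable). Substituting the bound on $|[A^{\alpha-1}]_{kj}|$ and factoring out the worst case $q^{d(i,j)-1}$ (valid since $q<1$) yields
\begin{equation*}
|[A^{\alpha}]_{ij}|\,\leq\,2\,\lambda_{\min}^{\alpha-1}\,q^{d(i,j)-1}\sum_{k=1}^n|a_{ik}|\,\leq\,2\,\lambda_{\min}^{\alpha-1}\,\|A\|_\infty\,q^{d(i,j)-1},
\end{equation*}
which is exactly~\eqref{eq:fractional_powers_bound_stieltjes}.

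The conceptual core of the argument is thus essentially a one-line consequence of the product formula; the only place requiring some care is the first step, namely citing (or re-deriving) a Cauchy--Stieltjes decay estimate in exactly the form $2f(\lmin)\,q^{d(i,j)}$, with the constant tracked correctly. I do not expect any genuine obstacle here, since such bounds are explicitly available in~\cite{BenziSimoncini2015,FrommerSchimmelSchweitzer2018}; the main subtlety is merely to verify that the hypothesis $d(i,j)\geq 2$ in the theorem correctly propagates to $d(k,j)\geq 1$ for each summation index $k$ that actually contributes, so that the off-diagonal Cauchy--Stieltjes bound may indeed be invoked.
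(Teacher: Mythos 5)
Your proposal is correct and follows essentially the same route as the paper: factor $A^{\alpha}=A\cdot A^{\alpha-1}$, apply the Cauchy--Stieltjes decay bound $|[A^{\alpha-1}]_{kj}|\leq 2\lambda_{\min}^{\alpha-1}q^{d(k,j)}$ from~\cite{FrommerSchimmelSchweitzer2018}, and use $d(k,j)\geq d(i,j)-1$ for the nonzero $a_{ik}$ to absorb the row sum into $\|A\|_\infty$. The only difference is that you additionally spell out the integral representation certifying that $z^{\alpha-1}$ is a Cauchy--Stieltjes function, which the paper simply asserts.
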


\begin{proof}
Define $B = A^{\alpha-1}$, so that $A^{\alpha} = AB$, or, written element-wise,
\begin{equation}\label{eq:A_alpha_stieltjes_matrix_product}
    [A^\alpha]_{ij} = \sum\limits_{k = 1}^n a_{ik} b_{kj}.
\end{equation}

As $z^{\alpha-1}$ is a  Cauchy--Stieltjes function, we can apply~\cite[Theorem~4]{FrommerSchimmelSchweitzer2018} to $B$, which states that 
\begin{equation}\label{eq:bound_stieltjes}
    |b_{kj} | \leq 2 \lambda_{\min}^{\alpha-1} \cdot q^{d(k,j)},
\end{equation}
where $q$ is as defined in~\eqref{eq:fractional_powers_bound_stieltjes}. When $a_{ik} \neq 0$ we clearly have $d(k,j) \geq d(i,j)-1$. Using this relation after inserting~\eqref{eq:bound_stieltjes} into~\eqref{eq:A_alpha_stieltjes_matrix_product} and taking the absolute value gives
\begin{equation*}
    |[A^{\alpha}]_{ij}| \leq 2 \lambda_{\min}^{\alpha-1} \cdot q^{d(i,j)-1} \sum\limits_{k = 1}^n | a_{ik}| \leq 2 \lambda_{\min}^{\alpha-1} \cdot q^{d(i,j)-1} \|A\|_\infty,
\end{equation*}
which concludes the proof.
\end{proof}

Note that the technique used in the proof of Theorem~\ref{the:fractional_powers_via_cauchy_stieltjes} cannot be applied when $A$ is only positive semidefinite, as $A^{\alpha-1}$ is not defined when $A$ has a zero eigenvalue. Thus, the result cannot be extended to this situation.

\begin{remark}
The bound~\eqref{eq:A_alpha_stieltjes_matrix_product} is stated in a rather simple form that is valid for all $i,j$ with $d(i,j) \geq 2$. When one is interested in a specific entry $|[A^\alpha]_{ij}|$, the bound can be sharpened to
\begin{equation*}
  |[A^{\alpha}]_{ij} | \leq 2 \lambda_{\min}^{\alpha-1} \cdot \min\{\|A_{i:}\|_1, \|A_{:j}\|_1 \}  \cdot q^{d(i,j)-1},
\end{equation*}
with $q$ as in~\eqref{eq:fractional_powers_bound_stieltjes}, where $A_{i:}, A_{:j}$ denote the $i$th row and $j$th column of $A$, respectively. This directly follows from the fact that both the bounds obtained from writing $A^\alpha = AA^{1-\alpha}$ and from writing $A^\alpha = A^{1-\alpha}A$ are valid for each entry, so that one can always select the smaller of the two.
\end{remark}

\begin{example}\label{ex:laplace2d}
\begin{figure}
\centering
\includegraphics[width=.99\linewidth]{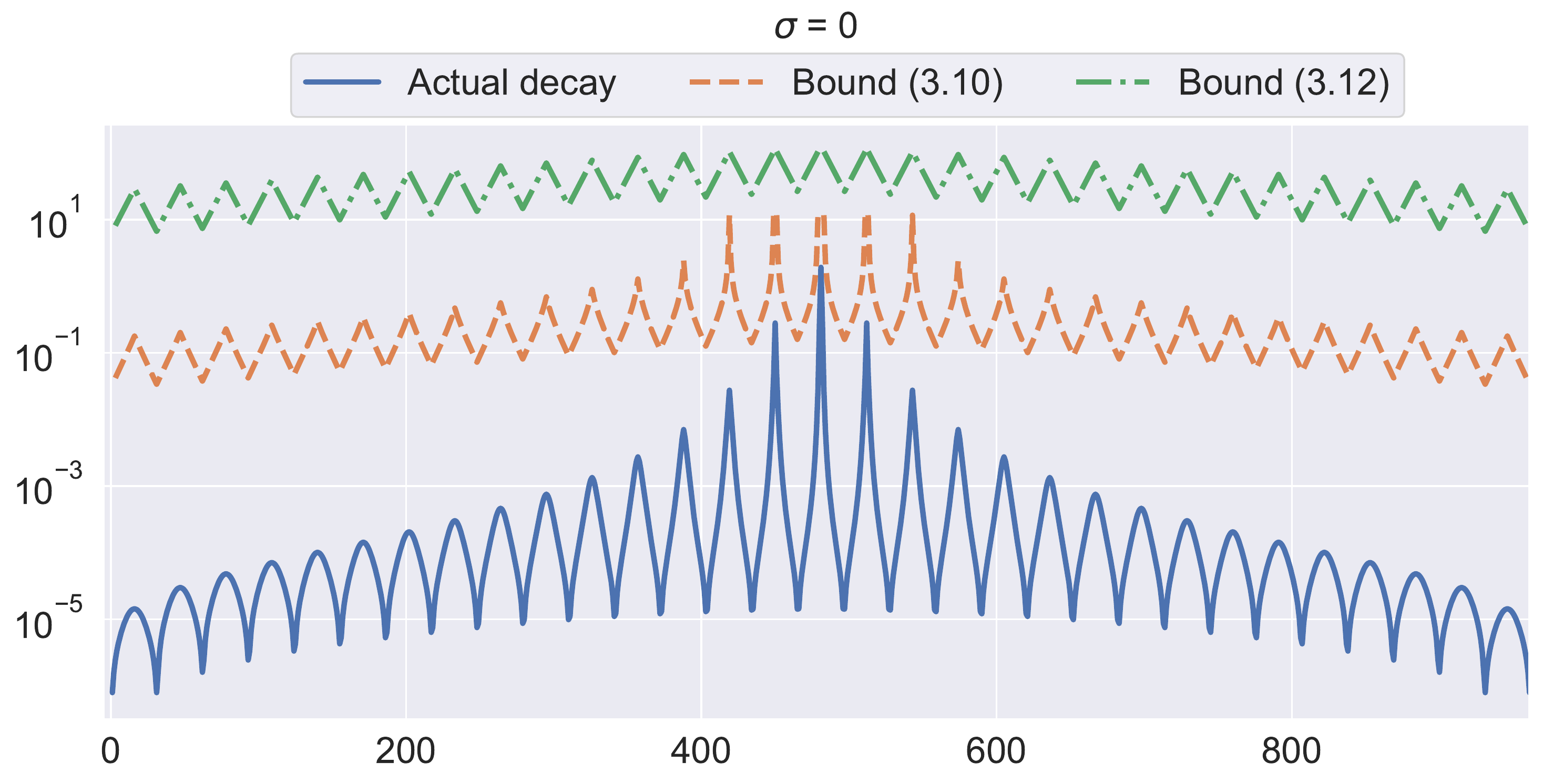}

\vspace{.5cm}
\includegraphics[width=.99\linewidth]{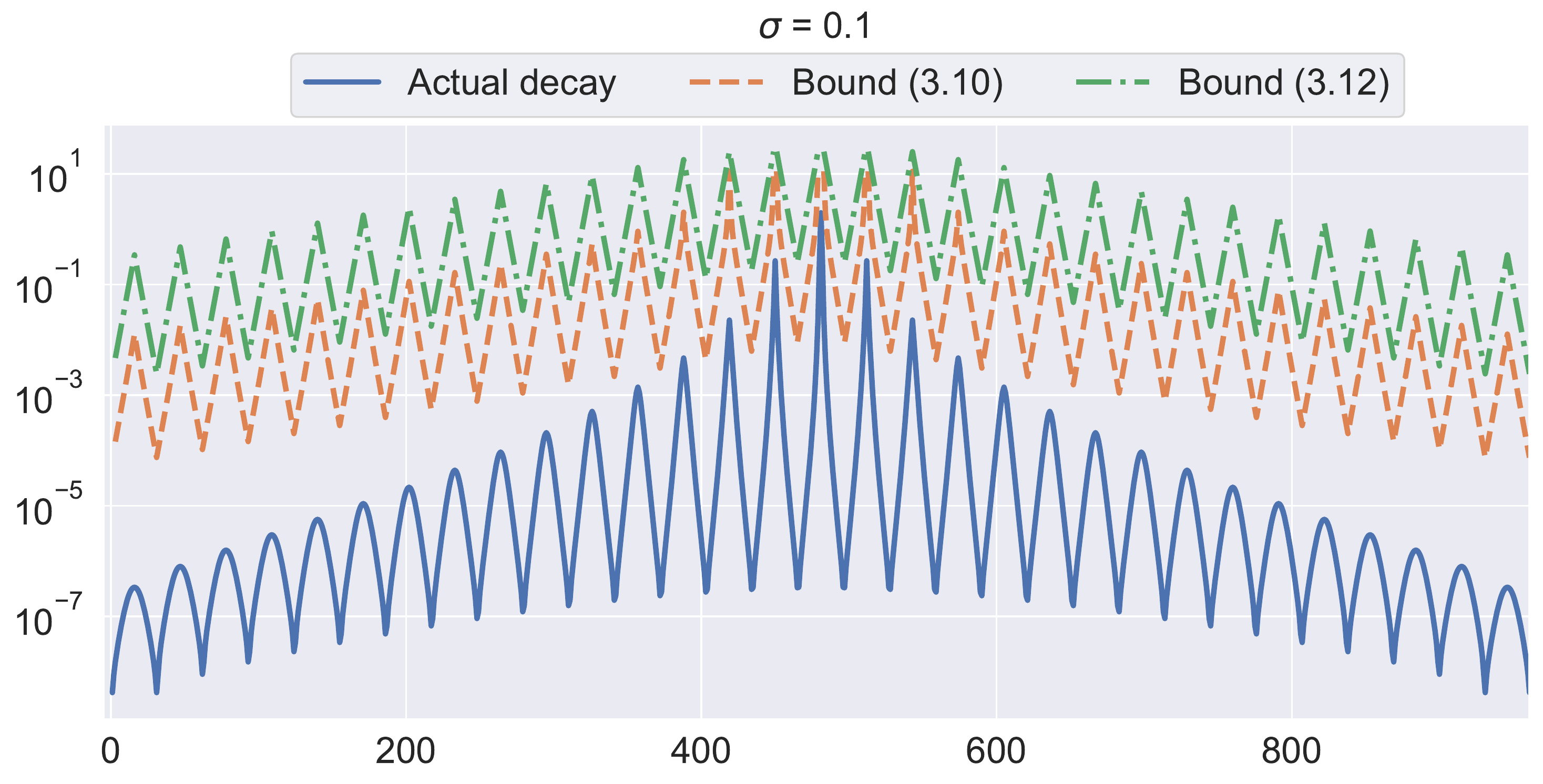}

\vspace{.5cm}
\includegraphics[width=.99\linewidth]{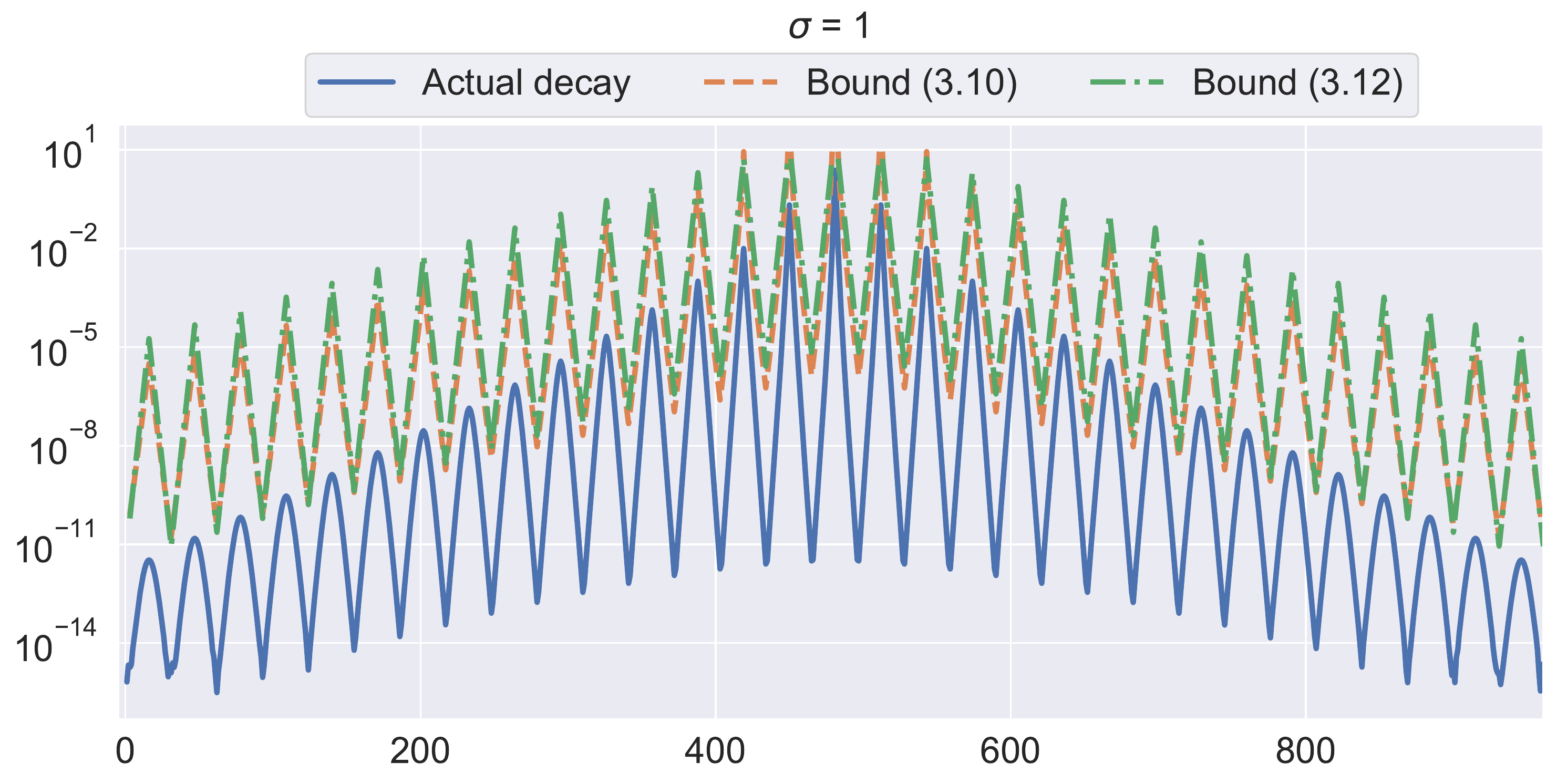}
\caption{
Decay in one column of $\sqrt{A}$, where $A = I \otimes M + M \otimes I \in C^{961 \times 961}$, where $M = \tridiag(-1,2 + \sigma,-1) \in \C^{31 \times 31}$ for $\sigma = 0$ (top), $\sigma = 0.1$ (center) and $\sigma = 1$ (bottom). The graph of $A$ is a regular two-dimensional grid of size $31\times 31$ and the depicted column corresponds to the node in the center of this grid.}
\label{fig:decay_2dlaplace}
\end{figure}
To illustrate our decay bounds for the positive definite case, we examine a simple model problem that is frequently used for demonstrating the quality of decay bounds~\cite{BenziSimoncini2015,FrommerSchimmelSchweitzer2021}. Let $A = I \otimes M + M \otimes I \in \C^{N^2 \times N^2}$, where $M = \tridiag(-1,2 + \sigma,-1) \in \C^{N \times N}$ and $\sigma \geq 0$. For the shift choice $\sigma = 0$, the matrix $A$ corresponds to the discretization of the two-dimensional Laplace operator on a regular square grid with homogeneous Dirichlet boundary conditions (up to a scaling). Increasing the shift $\sigma$ makes the resulting matrix $A$ better conditioned. We are interested in decay in the matrix square root $\sqrt{A}$, which plays an important role in Dirichlet-to-Neumann maps.

For our experiment, we choose $N = 31$, resulting in a matrix $A$ of size $961 \times 961$ and compare our bounds~\eqref{eq:bernstein_bound_exp_posdef} and~\eqref{eq:fractional_powers_bound_stieltjes} for the three parameters $\sigma = 0$, $\sigma = 0.1$ and $\sigma = 1$. We consider the magnitude of the entries of the column of $\sqrt{A}$ that belongs to the node in the center of the graph of $A$, i.e., at grid position $(16, 16)$. The integrals in the bound~\eqref{eq:bernstein_bound_exp_posdef} are approximated using the general-purpose quadrature routine \texttt{quad} from \texttt{SciPy.integrate}. The results of this experiment are depicted in Figure~\ref{fig:decay_2dlaplace}. Note that the seemingly ``oscillatory'' behavior of the entries of $\sqrt{A}$ is caused by the row-wise numbering of grid nodes. If plotted over the two-dimensional grid, one would observe a smooth decay with respect to the geodesic graph distance, as expected.

We observe that the integral bound~\eqref{eq:bernstein_bound_exp_posdef} always lies below the bound~\eqref{eq:fractional_powers_bound_stieltjes}, with the distance between the bounds reducing when the shift $\sigma$ is increased. Another interesting observation is that for $\sigma = 0$, the bound~\eqref{eq:bernstein_bound_exp_posdef} much better resembles the actual slope of the decay for nodes nearby the center of the grid, as it is not restricted to simple exponential decay of the form $C\cdot q^{d(i,j)}$.\hfill$\diamond$
\end{example}

\section{Application to the fractional graph Laplacian}\label{sec:fractional_laplacian}

Given an undirected graph $G = (V, E)$, the \emph{graph Laplacian} $L_G$ of $G$ is defined as
\begin{equation*}
L_G = D_G - A_G,
\end{equation*}
where $A_G$ is the adjacency matrix of $G$ and $D_G$ is a diagonal matrix containing the degrees of the nodes of $G$ on the diagonal. The graph Laplacian has applications in modeling diffusion processes on graphs, but also in spectral clustering~\cite{VonLuxburg2007}, graph drawing algorithms~\cite{Koren2003} and many other areas. As all row sums of the graph Laplacian are equal to zero, it is necessarily a singular matrix, and it is well known that it is always positive semidefinite~\cite{Merris1994}.

Recently, interest in the \emph{fractional graph Laplacian} has emerged, which allows to model nonlocal diffusion processes on graphs or use nonlocal random walks for the exploration of large networks~\cite{BenziBertacciniDurastanteSimunec2019,BenziSimunec2021,BianchiDonatelliDurastanteMazza2021,BertacciniDurastante2021,Estrada2021}. The fractional graph Laplacian is simply defined by taking a fractional power of the ordinary Laplacian, i.e., by $L_G^\alpha$. As $L_G^\alpha$ is a singular $M$-matrix~\cite{BenziBertacciniDurastanteSimunec2019} with all entries nonzero (if $G$ is connected), it can be interpreted as the Laplacian of a \emph{weighted, fully-connected graph} $G^\alpha$ on the same set $V$ of nodes. In this context, decay bounds for the entries of $L_G^\alpha$ are of interest because they give insight into the nature of the connection strength in $G^\alpha$ between nodes that were not connected in $G$. 

\subsection{Power law decay in the fractional Laplacian}\label{subsec:power_law}
In~\cite{BenziBertacciniDurastanteSimunec2019, BianchiDonatelliDurastanteMazza2021} it was observed that the entries of the fractional graph Laplacian $L_G^\alpha$ exhibit a power-law decay away from the sparsity pattern of $L_G$. In particular, e.g., the following result was shown, which is based on Jackson's theorem~\cite{Meinardus1967}.
\begin{theorem}[Corollary~3.1 in~\cite{BenziBertacciniDurastanteSimunec2019}]\label{the:decay_jackson}
Let $L_G$ be the Laplacian of an undirected graph $G$ and let $\alpha \in (0,1)$. Then, if $d(i,j) \geq 2$, we have
\begin{equation}\label{eq:decay_jackson}
|(L_G^{\alpha})_{ij}| \leq c \cdot \left(\frac{\rho(L_G)}{2}\right)^\alpha\cdot (d(i,j)-1)^{-\alpha}
\end{equation}
with $c = 1+\pi^2/2$.
\end{theorem}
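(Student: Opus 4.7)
The plan is to apply the classical polynomial-approximation strategy for matrix-function decay, with a Jackson-type best-approximation estimate tailored to the H\"older function $z^\alpha$. This is a completely different route than Section~\ref{sec:decay}: rather than exploiting the L\'evy--Khintchine representation and bounds on the matrix exponential, it reduces decay in $L_G^\alpha$ directly to scalar approximation theory.

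The starting point is the standard sparsity observation: for any polynomial $p_m$ of degree at most $m$, one has $[p_m(L_G)]_{ij} = 0$ whenever $m < d(i,j)$, because $[L_G^k]_{ij}$ is a sum over walks of length $k$ from $i$ to $j$ in $G$ and no such walk exists when $k < d(i,j)$. Taking $m = d(i,j)-1$ and using that $L_G$ is Hermitian with $\spec(L_G) \subseteq [0,\rho(L_G)]$, I would write
\begin{equation*}
|[L_G^\alpha]_{ij}| = |[L_G^\alpha - p_m(L_G)]_{ij}| \leq \|L_G^\alpha - p_m(L_G)\|_2 \leq \max_{\lambda \in [0,\rho(L_G)]} |\lambda^\alpha - p_m(\lambda)|,
\end{equation*}
and then pass to the infimum over $p_m$ of degree $d(i,j)-1$, reducing the problem to bounding the best uniform approximation error $E_{d(i,j)-1}(z^\alpha;[0,\rho(L_G)])$.

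Next I would invoke Jackson's theorem in the form given in~\cite{Meinardus1967}: for any continuous function on $[-1,1]$ with modulus of continuity $\omega$, the best polynomial approximation of degree at most $n$ has error at most $(1+\pi^2/2)\,\omega(1/n)$. The function $z^\alpha$ is $\alpha$-H\"older on $[0,\rho(L_G)]$ with $\omega(\delta) \leq \delta^\alpha$, by subadditivity of $z \mapsto z^\alpha$ for $\alpha \in (0,1)$. Linearly rescaling $[0,\rho(L_G)]$ to $[-1,1]$ multiplies increments of the argument by $\rho(L_G)/2$, so the transplanted function has modulus of continuity $((\rho(L_G)/2)\delta)^\alpha$ and Jackson's theorem with $n = d(i,j)-1$ yields
\begin{equation*}
E_{d(i,j)-1}(z^\alpha;[0,\rho(L_G)]) \leq \left(1 + \frac{\pi^2}{2}\right) \left(\frac{\rho(L_G)}{2}\right)^\alpha (d(i,j)-1)^{-\alpha},
\end{equation*}
which combined with the previous display gives exactly~\eqref{eq:decay_jackson}.

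The main obstacle I expect is keeping constants and scaling consistent. Jackson-type inequalities come in several inequivalent normalizations with different absolute constants, and the particular value $c = 1+\pi^2/2$ is tied to the Meinardus formulation; a misapplied version would produce either a suboptimal constant or, worse, the wrong power of $\rho(L_G)/2$. It is also worth explicitly verifying $\omega(\delta) \leq \delta^\alpha$ on the full interval including the origin, where $z^\alpha$ is only H\"older and not Lipschitz; this does hold, but the endpoint behavior is exactly what distinguishes this argument from the Bernstein/Chebyshev analyses that give geometric decay for strictly positive-definite $A$ (cf.~Theorem~\ref{the:fractional_powers_via_cauchy_stieltjes}), and accounts for the merely algebraic rate in~\eqref{eq:decay_jackson}.
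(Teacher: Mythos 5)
Your argument is correct and is exactly the route the paper attributes to this result: it is quoted from Corollary~3.1 of the cited reference, where it is proved by the same combination of the sparsity identity $[p_m(L_G)]_{ij}=0$ for $m<d(i,j)$, the bound $|[f(L_G)-p_m(L_G)]_{ij}|\leq \max_{\lambda\in[0,\rho(L_G)]}|f(\lambda)-p_m(\lambda)|$ for Hermitian matrices, and Jackson's theorem in the Meinardus normalization applied to the $\alpha$-H\"older function $z^\alpha$ after rescaling $[0,\rho(L_G)]$ to $[-1,1]$. Your constant bookkeeping ($c=1+\pi^2/2$, the factor $(\rho(L_G)/2)^\alpha$ from the affine change of variables, and the modulus of continuity $\omega(\delta)\leq\delta^\alpha$ via subadditivity) is all consistent with the stated bound.
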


We now compare our new result, Theorem~\ref{the:decay_z_alpha}, to Theorem~\ref{the:decay_jackson}. An important observation concerning~\eqref{eq:bounds_z_alpha} and~\eqref{eq:bounds_squareroot} is that the first term in the sum goes to zero exponentially in $d(i,j)$, so that asymptotically, the second and third term control the decay behavior in $L_G^{\alpha}$. So Theorem~\ref{the:decay_z_alpha} gives an asymptotic decay estimate of the form
\begin{equation}\label{eq:asymptotic_bound_square_root}
|[L_G^{\alpha}]_{ij}| \lesssim C \cdot d(i,j)^{-2\alpha},
\end{equation}
where $C$ is a constant, showing a power-law decay, as already observed in~\cite{BenziBertacciniDurastanteSimunec2019,BianchiDonatelliDurastanteMazza2021}, but with the improved exponent $-2\alpha$ instead of $-\alpha$. Thus, our new bounds show that the strength of connection between far apart nodes in $G^\alpha$ must actually drop off faster than known so far.

\begin{remark}
The bound~\eqref{eq:asymptotic_bound_square_root} only holds in an asymptotic sense, because we ignore the influence of the first term in~\eqref{eq:bounds_z_alpha}. It can, however, also easily be cast into an explicit, non-asymptotic form. Taking, e.g., $\alpha = \frac{1}{2}$, we have that
\begin{equation*}
    \frac{e^d}{d^d}\gamma\left(d-3/2, \frac{d}{2}\right) \leq d^{-1}
\end{equation*}
holds for all $d \geq 4$. Thus, by Theorem~\ref{the:decay_z_alpha} we directly find a rigorous, non-asymptotic bound of the form
\begin{equation*}
|[\sqrt{L_G}]_{ij}| \leq \widetilde{C} \cdot d(i,j)^{-1} \text{ for all $i,j$ with $d(i,j) \geq 4$}
\end{equation*}
with a modified constant $\widetilde{C}$.
\end{remark}

\begin{example}\label{ex:1dchain}
To illustrate how our new decay estimates compare to those from~\cite{BenziBertacciniDurastanteSimunec2019,BianchiDonatelliDurastanteMazza2021}, we begin by considering a very simple test problem: Let $G$ be a one-dimensional chain of length $n$. The corresponding graph Laplacian is the tridiagonal matrix
\begin{equation*}
L_G = \left[\begin{array}{ccccc}
1 & -1 & & &\\
-1 & 2 & \ddots & &\\
 & \ddots & \ddots & \ddots & \\
 & & -1 & 2 & -1 \\
& & & -1 & 1 
\end{array}\right] \in \Rnn.
\end{equation*}

\begin{figure}
\centering
\includegraphics[width=.99\linewidth]{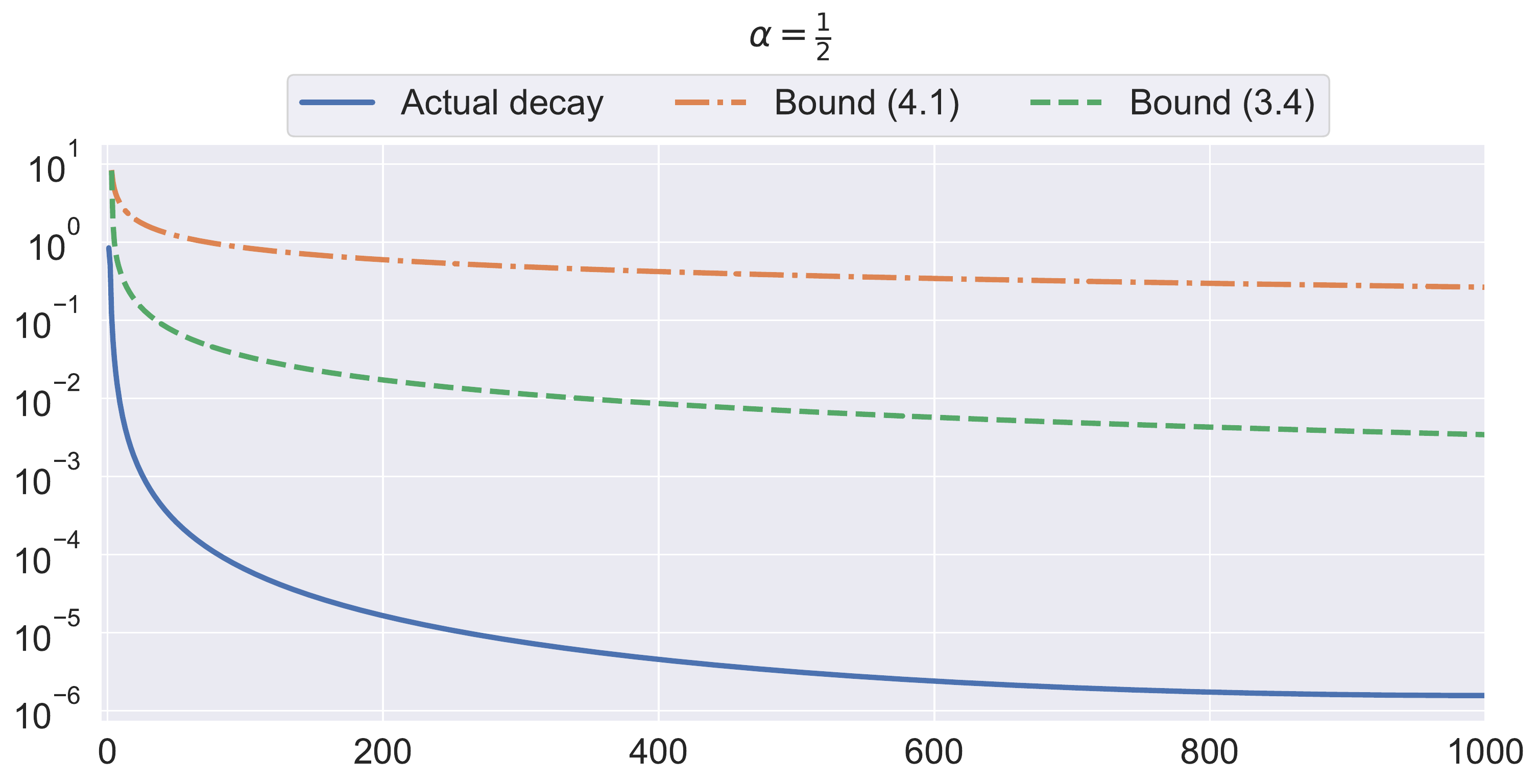}

\vspace{.5cm}
\includegraphics[width=.99\linewidth]{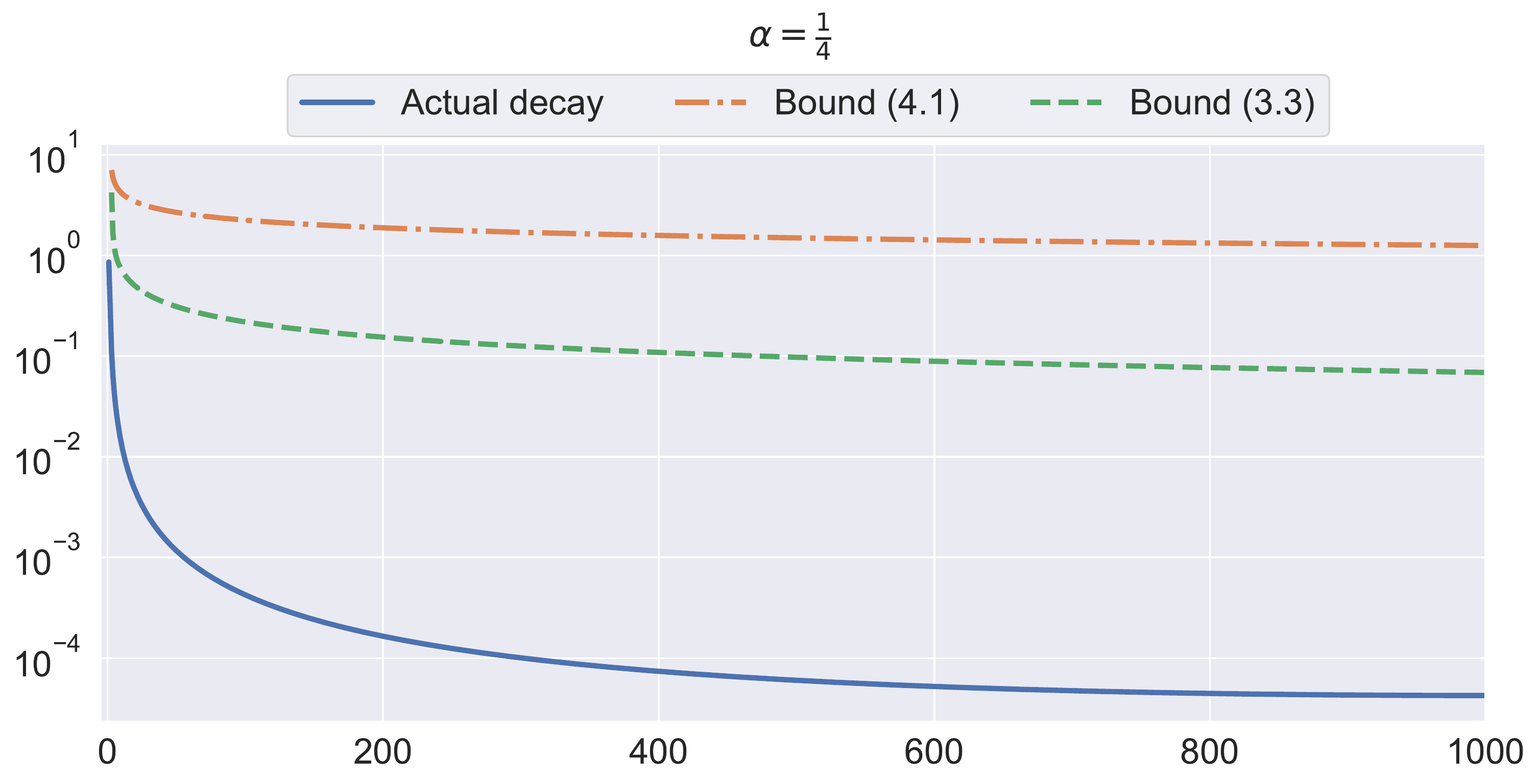}
\caption{
Decay in the first column of $L_G^\alpha$ where $L_G$ is the Laplacian corresponding to a one-dimensional chain and $\alpha = \frac12$ (top) or $\alpha = \frac14$ (bottom).}
\label{fig:decay_chain}
\end{figure}

By the Ger\v{s}gorin disk theorem, the spectral radius of $L_G$ is bounded by $\rho(L_G)\leq 4$ independently of $n$. In Figure~\ref{fig:decay_chain} we compare the bound of Theorem~\ref{the:decay_z_alpha} to that of Theorem~\ref{the:decay_jackson} for $\alpha = \frac12$ and $\alpha = \frac14$. In both cases we can observe that the slope of our new bound more closely resembles the actual decay behavior due to the additional factor 2 in the exponent. In addition, we also predict the order of magnitude of the entries better (though they are still overestimated by a quite large margin). Concerning this, it is also interesting to compare the constants involved in the two bounds. In Theorem~\ref{the:decay_jackson}, there is only one relevant constant, which is
$$\left(1+\frac{\pi^2}{2}\right)\sqrt{\frac{\rho(L_G)}{2}} \approx 8.39 \text{ for } \alpha = \frac12$$
and 
$$\left(1+\frac{\pi^2}{2}\right)\left(\frac{\rho(L_G)}{2}\right)^{\frac14} \approx 7.06  \text{ for } \alpha = \frac14.$$

In Theorem~\ref{the:decay_z_alpha}, several constants occur. As the first term does not play a role in the asymptotic behavior for growing $d$, we ignore it. The constant in front of the term $d(i,j)^{-1}$ can be estimated as 
$$\frac{5}{2}\sqrt{5\rho(L_G)}\erfc\left(\sqrt{\frac45}\right)+\sqrt{\frac{\rho(L_G)}{\pi}} \sim 3.43  \text{ for } \alpha = \frac12$$
because $\erfc\left(\sqrt{\frac{2d(i,j)}{5}}\right)$ goes to zero as $d(i,j)$ increases. For $\alpha = \frac14$, we obtain the constant
$$\frac{\alpha}{\Gamma(1-\alpha)}\left(10\left(\frac{5\rho(L_G)}{4}\right)^\alpha \cdot \Gamma\left(\alpha,\frac{4}{5}\right)+\frac{\rho(L_G)^\alpha}{\alpha}\right) \sim 2.18,$$
again making use of the fact that $\Gamma\left(\alpha,\frac{2d(i,j)}{5}\right)$ goes to zero for increasing $d(i,j)$.
Thus, the constants in the bounds have---at least asymptotically for large distances--- also improved by a factor of roughly $2.4$ and $3.2$, respectively.\hfill$\diamond$
\end{example}

\begin{example}\label{ex:geometric_graph}
\begin{figure}
\centering
\includegraphics[width=.30\linewidth]{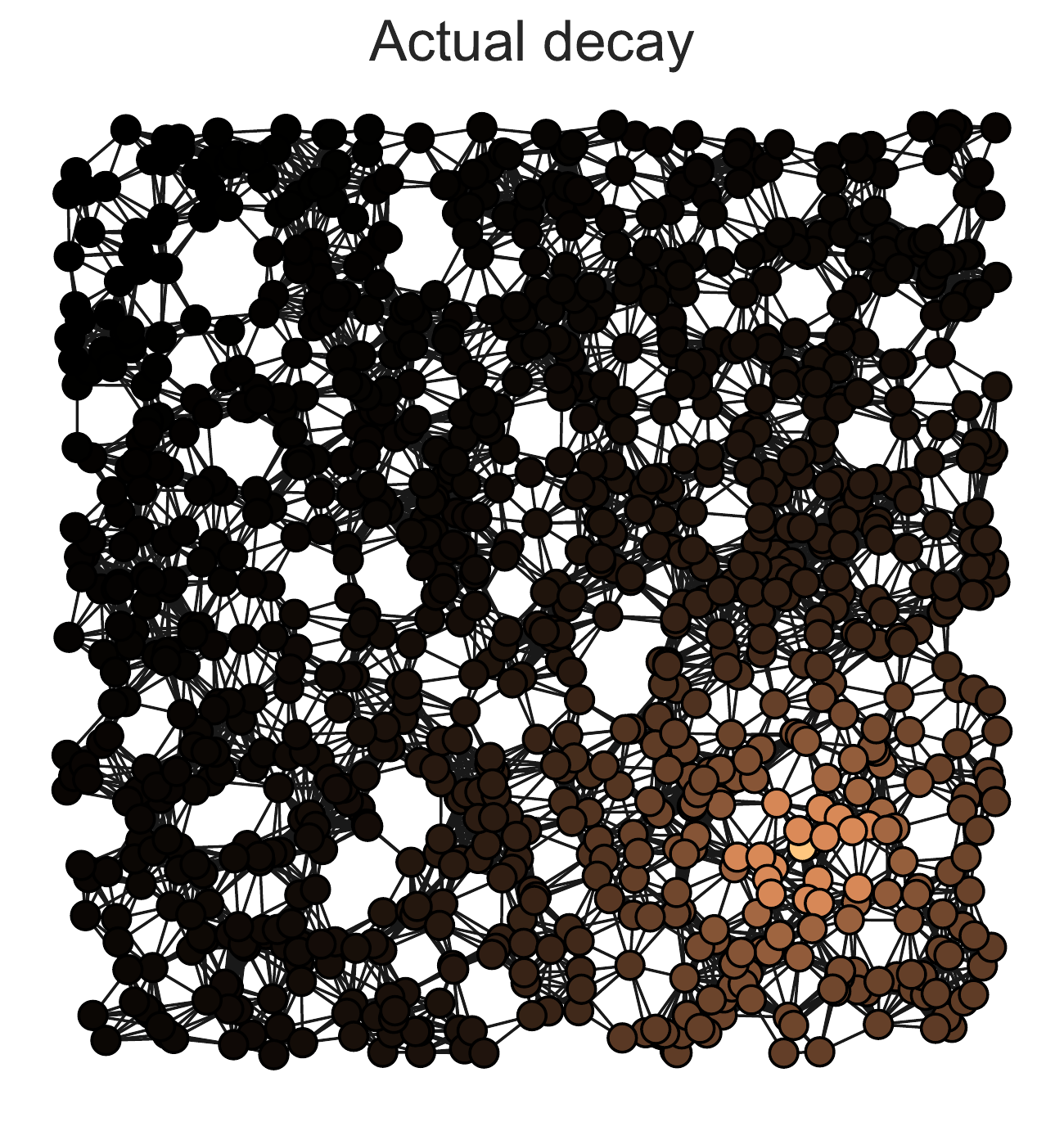}
\includegraphics[width=.30\linewidth]{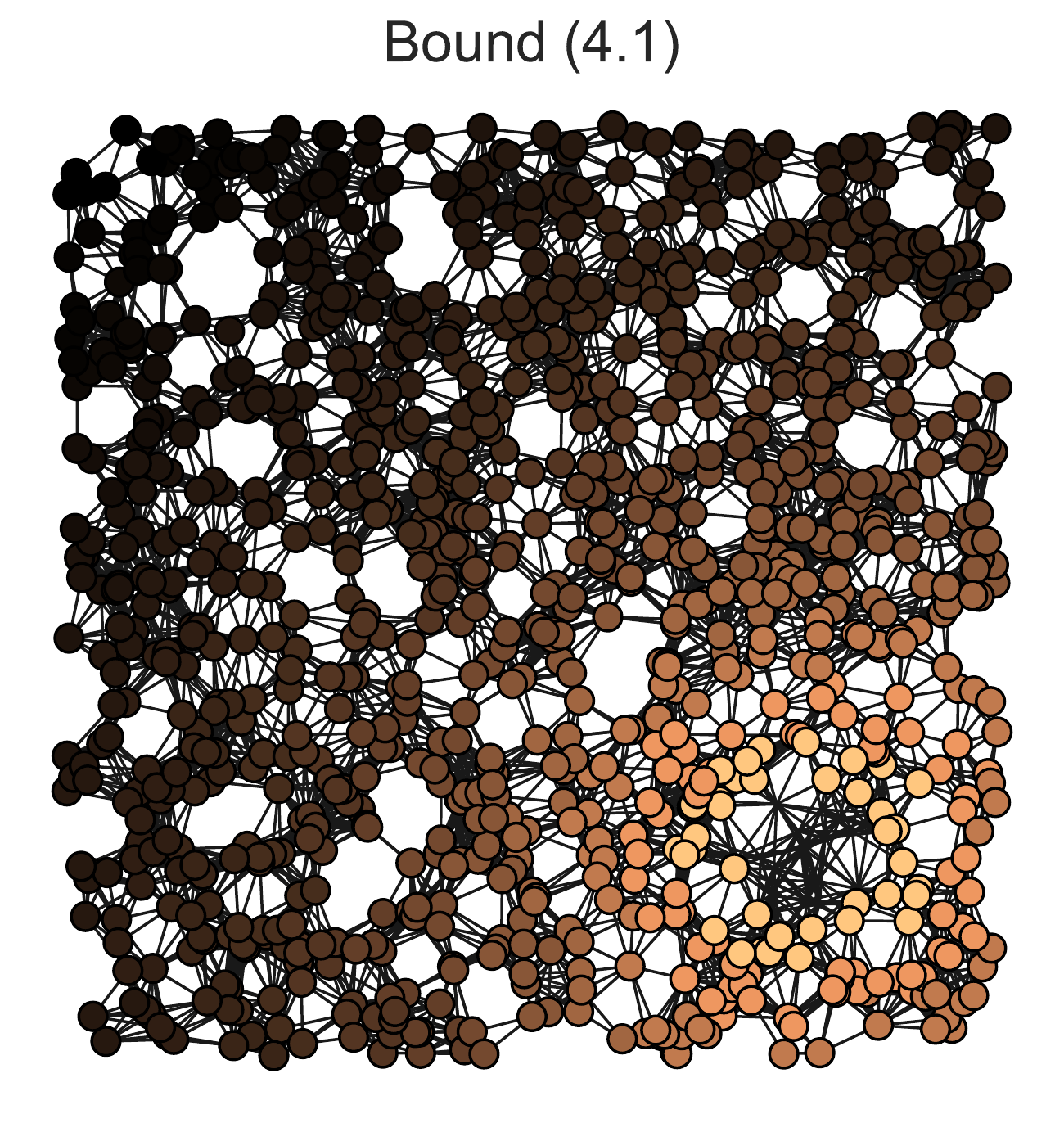}
\includegraphics[width=.35\linewidth]{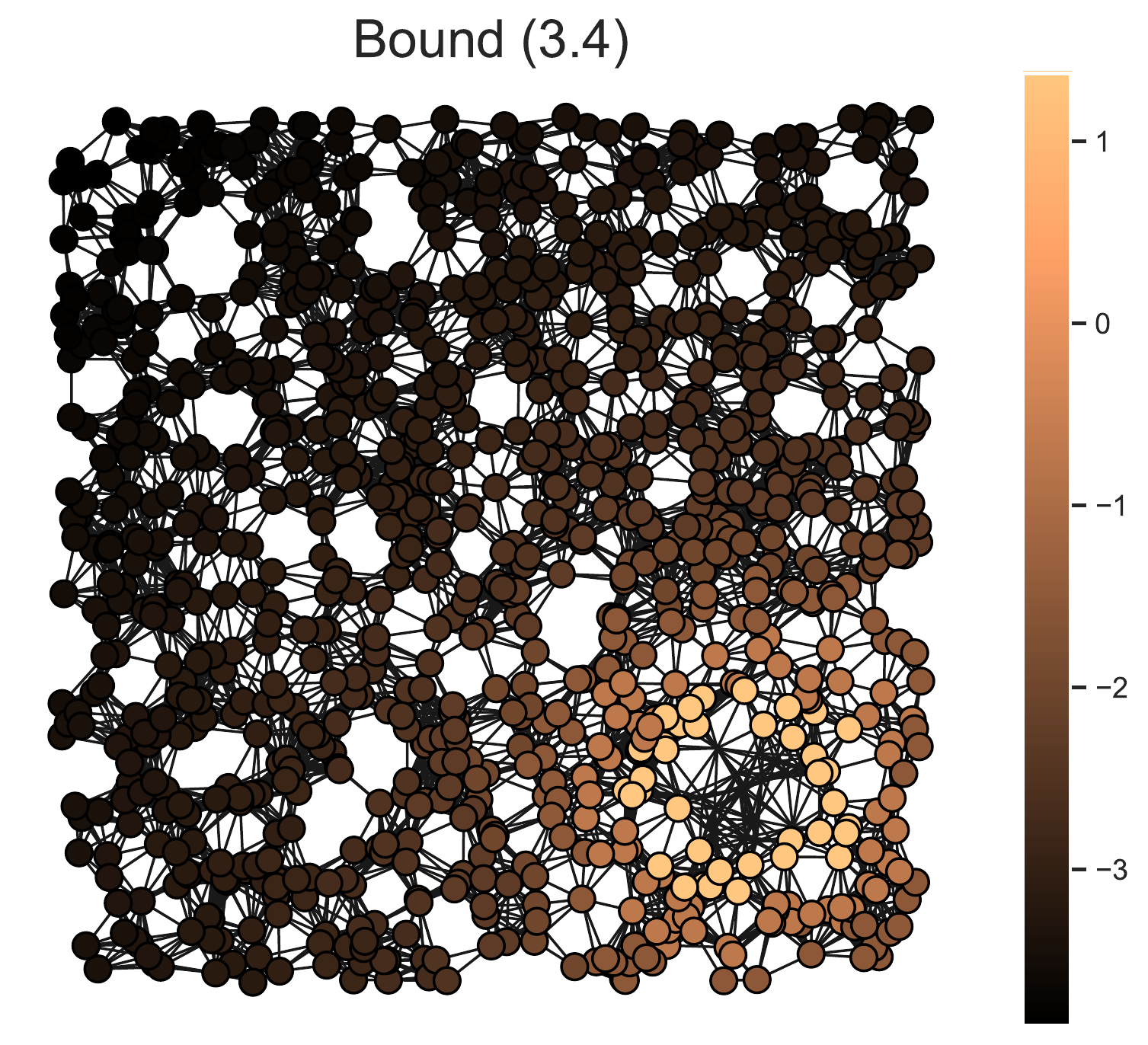}
\caption{Decay in one column of the fractional Laplacian $\sqrt{L_G}$ for a random geometric graph $G$ on a logarithmic scale. Left: Actual decay, Center: Bound~\eqref{eq:decay_jackson}, Right: Bound~\eqref{eq:bounds_squareroot}. In the center and right graph, nodes for which no bound is available are not drawn.}
\label{fig:decay_geometric_graph_draw}
\end{figure}
\begin{figure}
\centering
\includegraphics[width=.99\linewidth]{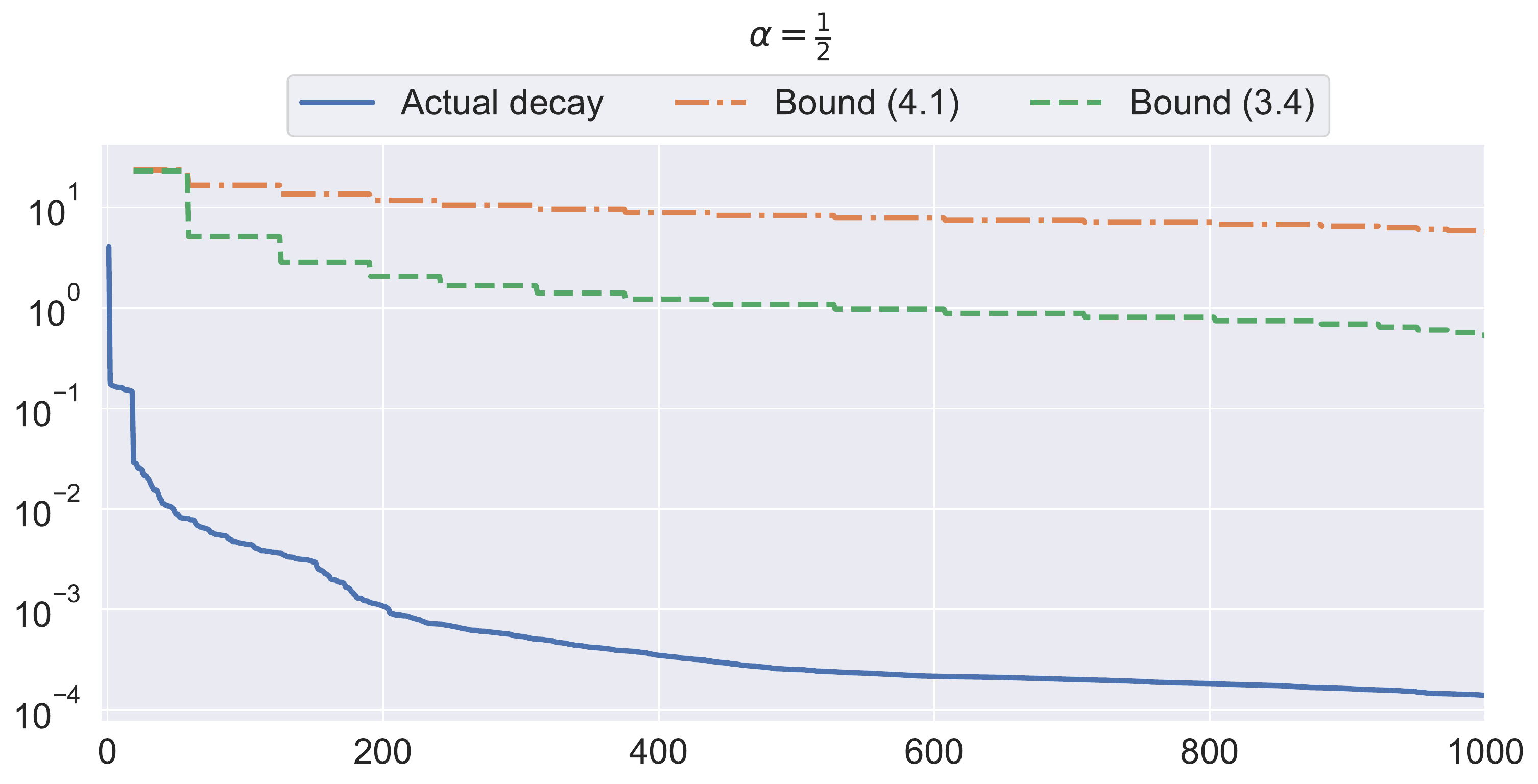}

\vspace{.5cm}
\includegraphics[width=.99\linewidth]{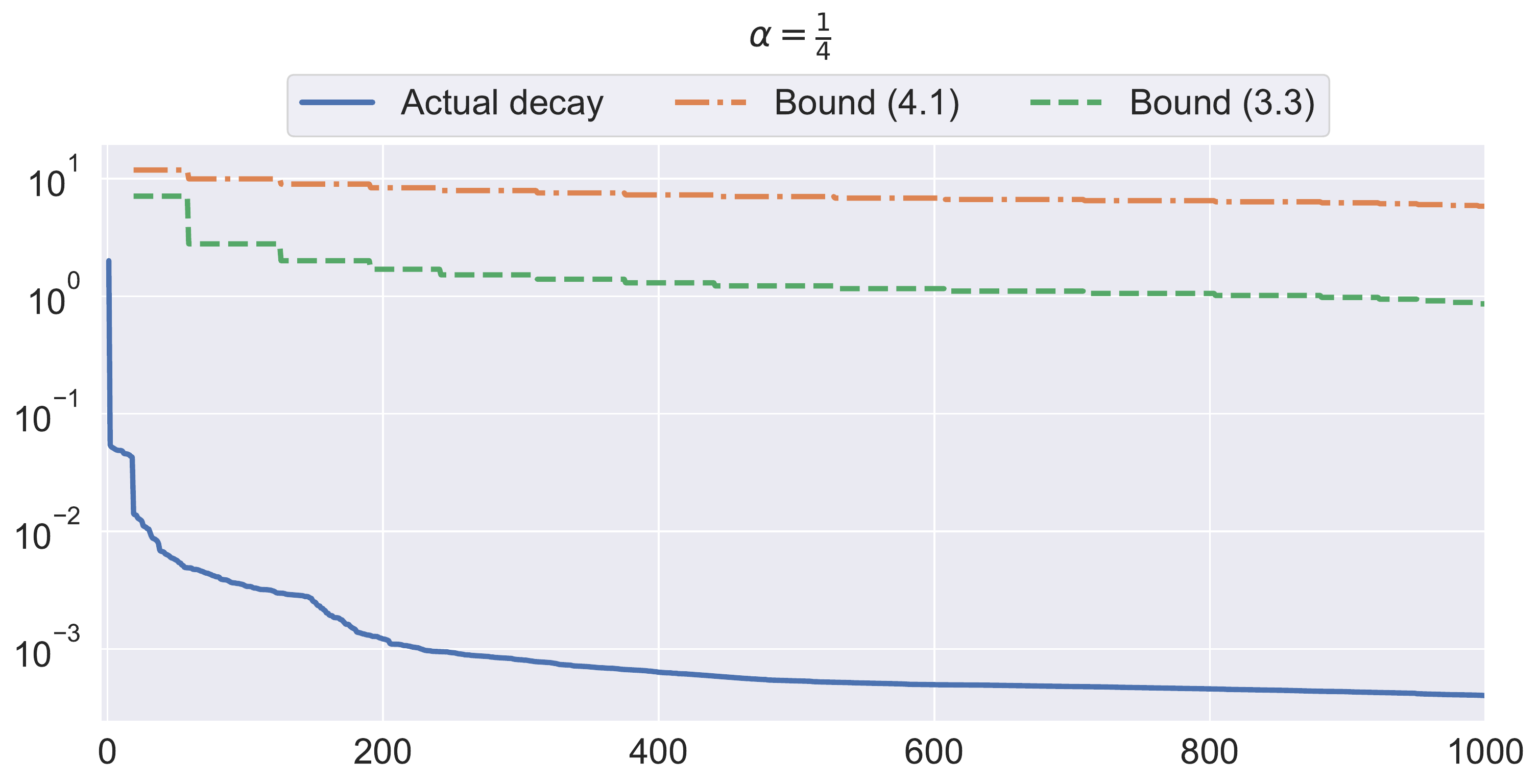}
\caption{
Decay in the first column of $L_G^\alpha$ where $L_G$ is the Laplacian corresponding to a random geometric graph $G$ (see text for details) and $\alpha = \frac12$ (top) or $\alpha = \frac14$ (bottom).}
\label{fig:decay_geometric_graph_lineplot}
\end{figure}
Next, we consider an example graph with a more irregular structure. We construct a \emph{random geometric graph} by sampling $n = 1000$ uniformly distributed points in the unit square and then connecting all pairs with distance below $0.075$ by an edge. The resulting graph is depicted in Figure~\ref{fig:decay_geometric_graph_draw}. The degrees of the nodes in this graph range from $4$ to $29$ and the spectral radius of its Laplacian is $\rho(L_G) \approx 31.64$. In the left-most plot in Figure~\ref{fig:decay_geometric_graph_draw}, the color coding of the nodes depicts the magnitude of the entries in one column of the fractional Laplacian $\sqrt{L_G}$ of this graph (on a logarithmic scale). The column corresponds to a node near the bottom right corner of the unit square (the node with brightest color in the plot). One can nicely see how the magnitude of the entries decays the farther one moves away from the source node, which is in line with what our decay bounds predict (and what one would intuitively expect). In the center part of Figure~\ref{fig:decay_geometric_graph_draw}, the decay bound~\eqref{eq:decay_jackson} is shown and in the right-most part, we plot our new bound~\eqref{eq:bounds_squareroot}. In both cases, nodes for which no bound is available (i.e., nodes with distance $1$ from the source node) are not drawn. Additionally, for easier comparison, we also show the magnitude of the entries and the bounds in a line plot in Figure~\ref{fig:decay_geometric_graph_lineplot}, which also contains results for the case $\alpha = \frac{1}{4}$. As the graph has no regular underlying structure, we order the nodes for this plot according to the magnitude of the corresponding column entries, so that the entries form a monotonically decreasing sequence, which makes it easier to make sense of the plot. Observe the resulting ``stair-case'' like structure of both bounds~\eqref{eq:bounds_z_alpha}/\eqref{eq:bounds_squareroot} and~\eqref{eq:decay_jackson}, which predict the same order of magnitude for all entries belonging to nodes that have the same distance from the source node. While less pronounced, we can also observe a similar structure with plateau-like areas in the actual decay.

Again our new bounds~\eqref{eq:bounds_z_alpha}/\eqref{eq:bounds_squareroot} more accurately predict the slope of the decay and also have a smaller magnitude than~\eqref{eq:decay_jackson}. However, both bounds overestimate the actual decay by quite a large margin and the actual decay slope is a little bit steeper than what the new bound predicts.\hfill$\diamond$
\end{example}

\subsection{Investigating the sharpness of the decay bounds}\label{subsec:sharpness}
An interesting question in the study of decay bounds is whether they are asymptotically optimal or whether there is a possibility for further improvement. Currently, we do not have a definitive answer to this question, but we give an illustrative example that suggests that a further improvement of the exponent in the power law might be possible.

\begin{example}\label{ex:cycle_explicit}
We consider a simple example graph for which we can derive analytical formulas for the entries of the fractional Laplacian. Let $G_n$ be a graph consisting of $n$ nodes arranged in a circle, where we assume that $n$ is odd. The Laplacian of this graph is given by
\begin{equation*}
L_{G_n} = \left[\begin{array}{ccccc}
2 & -1 & & &-1\\
-1 & 2 & \ddots & &\\
 & \ddots & \ddots & \ddots & \\
 & & -1 & 2 & -1 \\
-1 & & & -1 & 2 
\end{array}\right] \in \Rnn.
\end{equation*}
The eigenvalues and eigenvectors of this matrix are analytically known and given by
\begin{equation}\label{eq:eigenvalues_1dcycle}
\lambda_k = \begin{cases}
4\sin^2\left(\frac{\pi k}{2n}\right) & \text{if $k$ is even,}\\
4\sin^2\left(\frac{\pi(k-1)}{2n}\right) & \text{if $k$ is odd}
\end{cases}
\end{equation}
and
\begin{equation}\label{eq:eigenvectors_1dcycle}
\vv_{i,k} = \begin{cases}
n^{-1/2} & \text{if $k = 1$,}\\
\sqrt{\frac{2}{n}}\sin\left(\frac{\pi(i-\frac12)k}{n}\right) & \text{if $k$ is even,}\\
\sqrt{\frac{2}{n}}\cos\left(\frac{\pi(i-\frac12)(k-1)}{n}\right) & \text{if $k$ is odd,}
\end{cases}
\end{equation}
respectively. In particular, all eigenvalues except $\lambda_1 = 0$ appear twice and the spectrum of $L_{G_n}$ is contained in $[0, 4]$ independent of $n$. Using~\eqref{eq:eigenvalues_1dcycle} and~\eqref{eq:eigenvectors_1dcycle}, we can analytically compute entries of $\sqrt{L_{G_n}}$. We have
\begin{eqnarray}
[\sqrt{L_{G_n}}]_{ij} &=& \sum_{k = 1}^n \sqrt{\lambda_k} \vv_{i,k}\vv_{j,k} \nonumber\\
	       &=& \sum_{\ell = 1}^{\frac{n-1}{2}}  \sqrt{\lambda_{2\ell}} (\vv_{i,2\ell}\vv_{j,2\ell}+\vv_{i,2\ell+1}\vv_{j,2\ell+1})\nonumber\\
	       &=& \frac{4}{n}\sum_{\ell=1}^{\frac{n-1}{2}} \sin\left(\frac{\pi \ell}{n}\right) \bigg(\sin\left(\frac{2\pi(i-\frac12)\ell}{n}\right) \sin\left(\frac{2\pi(j-\frac12)\ell}{n}\right) \nonumber\\ 
	       & &\qquad\qquad \qquad\qquad\qquad+ \cos\left(\frac{2\pi(i-\frac12)\ell}{n}\right)\cos\left(\frac{2\pi(j-\frac12)\ell}{n}\right)\bigg)\nonumber\\
	       &=& \frac{4}{n} \sum_{\ell=1}^{\frac{n-1}{2}} \sin\left(\frac{\pi \ell}{n}\right) \cos\left(\frac{2\pi\ell(i-j)}{n}\right), \label{eq:sqrtL_ij}
\end{eqnarray}
where we used an angle sum identity for the last equality. We can resolve the summation in~\eqref{eq:sqrtL_ij} using several standard trigonometric identities, yielding
\begin{equation}\label{eq:sqrtL_ij2}
[\sqrt{L_{G_n}}]_{ij} = \frac{1}{n}\left(\cot\left(\frac{\pi(2(i-j)+1)}{2n}\right) + \cot\left(\frac{\pi(1-2(i-j))}{2n}\right)\right).
\end{equation}
For the $(\lceil\frac{n}{2}\rceil,1)$-entry, above formula~\eqref{eq:sqrtL_ij2} simplifies to 
\begin{equation}\label{eq:sqrtL_ij3}
[\sqrt{L_{G_n}}]_{\lceil\frac{n}{2}\rceil,1} = \frac{1}{n} \cot\left(\pi\left(\frac{1}{n}-\frac{1}{2}\right)\right).
\end{equation}
From L'H\^opital's rule, one can see that~\eqref{eq:sqrtL_ij3} implies that as $n$ goes to infinity, $[\sqrt{L_{G_n}}]_{\lceil\frac{n}{2}\rceil,1}$ goes to zero as $(\lceil\frac{n}{2}\rceil-1)^{-2}$. In contrast, the decay bound of Theorem~\ref{the:decay_z_alpha} predicts a decrease as $(\lceil\frac{n}{2}\rceil-1)^{-1}$. \hfill$\diamond$
\end{example}

In the numerical experiments in Example~\ref{ex:1dchain} and~\ref{ex:geometric_graph}---as well as in other numerical experiments not reported here---we observed that the actual decay was even faster than predicted by our new, refined bounds, and we were not able to find a graph for which the bound was (provably or experimentally) asymptotically sharp. Thus, motivated by Example~\ref{ex:cycle_explicit}, we conjecture that
\begin{equation*}
|[L_G^{\alpha}]_{ij}| \lesssim C \cdot d(i,j)^{-4\alpha}.
\end{equation*}
It remains an open topic for future research to prove or disprove this conjecture.

\section{Conclusions}\label{sec:conclusions}
We have derived new integral-based decay bounds for Bernstein functions of Hermitian matrices $A$, with special emphasis on the case that $A$ is positive semidefinite and $f(z) = z^\alpha, \alpha \in (0,1)$. In this case, analytic expressions for all invovled integrals are available, making the bounds particularly easy to use. In other cases, some of the integrals appearing in the bounds need to be evaluated by numerical quadrature. As a particularly important application, we have considered nonlocal network dynamics described by the fractional graph Laplacian. It is well-known that the strength of connection between far apart nodes in the network follows a power law in this case, and we were able to improve the exponent of this power law from $-\alpha $ to $-2\alpha$ using our new approach. Motivated by studying the closed form representation of the fractional Laplacian of a cycle graph, we conjectured that a further improvement up to an exponent of $-4\alpha$ could be possible.

\section*{Acknowledgement} 
The author wishes to thank Fabio Durastante and Andreas Frommer for helpful comments on an earlier version of the manuscript.

\bibliographystyle{siam}
\bibliography{matrixfunctions}

\end{document}